\theoremstyle{definition}
\theoremstyle{definition}
\theoremstyle{plain}
\newtheorem{theo}{Theorem}%[section]
\theoremstyle{plain}
\newtheorem{theor}{Theorem}%[section]
\theoremstyle{plain}
\theoremstyle{plain}
\theoremstyle{plain}
\theoremstyle{plain}
\newtheorem{thm}{Theorem}[subsection]%[subsection]
\theoremstyle{definition}
\theoremstyle{definition}
\theoremstyle{definition}
\theoremstyle{definition}
\theoremstyle{definition}
\newtheorem{rem}[thm]{Remark}
\theoremstyle{plain}
\newtheorem{prop}[thm]{Proposition}
\theoremstyle{plain}
\newtheorem{lem}[thm]{Lemma}
\theoremstyle{plain}
\newtheorem{cor}[thm]{Corollary}
\theoremstyle{definition}
\theoremstyle{definition}
\theoremstyle{plain}
\theoremstyle{definition}
\theoremstyle{plain}
\newtheorem{prob}[thm]{Problem}
\numberwithin{equation}{section}
\def\F{\mathbb{F}}
\begin{document}

%Title
\title{A note on certain superspecial and maximal curves of genus $5$}
\author{Momonari Kudo\thanks{Kobe City College of Technology.}
\thanks{Institute of Mathematics for Industry, Kyushu University. E-mail: \texttt{m-kudo@math.kyushu-u.ac.jp}}}
\providecommand{\keywords}[1]{\textbf{\textit{Key words---}} #1}
\maketitle

\begin{abstract}
\if 0
\textcolor{blue}{This note aims to complement our preceding results in the non-hyperelliptic case.}
A curve over a perfect field $K$ of characteristic $p > 0$ is said to be {\it superspecial} if its Jacobian is isomorphic to a product of supersingular elliptic curves over the algebraic closure $\overline{K}$.
In recent years, isomorphism classes of superspecial nonhyperelliptic curves of \textcolor{blue}{genera $4$ and $5$} over finite fields in small characteristic have been enumerated.
In particular, the non-existence of superspecial curves of genus $4$ in characteristic $p = 7$ was proved.
In this note, we give an elementary proof of the existence of superspecial nonhyperelliptic curves of \textcolor{blue}{genera $4$ and $5$ for infinitely many primes $p$.}
Specifically, we prove that the variety $C_p : x^3+y^3+w^3= 2 y w + z^2 = 0$ in the projective $3$-space with $p > 2$ is a superspecial curve of genus $4$ if and only if $p \equiv 2 \pmod{3}$.
\textcolor{blue}{We also show that the desingularization of $C_{p,s,t} : x y z^3 + s x^5 + t y^5 = 0$ for $s, t \in \mathbb{F}_{p^2}$ is a superspecial trigonal curve of genus $5$ if and only if $p \equiv 2 \pmod{3}$ and $p \equiv 1 \pmod{5}$, or $p \equiv 2 \pmod{3}$ and $p \equiv 4 \pmod{5}$}.
\fi
% Our computational results show that $C_p$ with $p \equiv 2 \pmod 3$ are {\it maximal} curves over $\mathbb{F}_{p^2}$ for all $3 \leq p \leq 269$.
In this note, we characterize certain maximal superspecial curves of genus $5$ over finite fields.
Specifically, we prove that the desingularization $T_p$ of $x y z^3 +  x^5 + y^5 = 0$ is a maximal superspecial trigonal curve of genus $5$ {\it if and only if} $p \equiv -1 \pmod{15}$ or $p \equiv 11 \pmod{15}$.
Moreover, we give families of maximal curves of more general type, which include $T_p$.
\end{abstract}

\keywords{Maximal curves, Superspecial curves, Curves of genus $5$}

%=========================
\section{Introduction}\label{sec:intro}
%=========================

% Let $p$ be a rational prime greater than $2$, and let $\mathbb{F}_q$ denote the finite field of $q$ elements, where $q$ is a power of $p$.
% Let $K$ be an arbitrary perfect field of characteristic $p$.
% We denote by $\overline{K}$ the algebraic closure of $K$.
By a curve, we mean a projective, geometrically irreducible, non-singular algebraic curve.
Let $K$ be a field of positive characteristic $p > 0$.
A curve $C$ of genus $g$ over $K$ is said to be {\it superspecial} if its Jacobian variety is isomorphic to the product of supersingular elliptic curves over the algebraic closure $\overline{K}$ of $K$.
% The existence of a superspecial curve over an algebraically closed field in characteristic $p$ implies that there exists a maximal or minimal curve over $\mathbb{F}_{p^2}$.
Any superspecial curve is a {\it supersingular} curve, which is a curve such that its Jacobian variety is isogenous to the product of supersingular elliptic curves.
Furthermore, superspecial curves are closely related to {\it maximal} curves and {\it minimal} curves, where a curve over $\mathbb{F}_q$ is called a maximal (resp.\ minimal) curve if the number of its $\mathbb{F}_q$-rational points attains the Hasse-Weil upper (resp.\ lower) bound $q + 1 + 2 g \sqrt{q}$ (resp.\ $q + 1 - 2 g \sqrt{q}$).
It is known that a superspecial curve over an algebraically closed field in characteristic $p$ descends to a maximal or minimal curve over $\mathbb{F}_{p^2}$.
In contrast, any maximal or minimal curve over $\mathbb{F}_{p^2}$ is superspecial.
Note that superspecial curves over $\mathbb{F}_{p^2}$ are not necessarily $\mathbb{F}_{p^2}$-maximal.
This note studies superspecial curves in order to find $\mathbb{F}_{p^2}$-maximal curves.

\if 0
% This work aims to find a lot of superspecial curves and maximal curves for a given genus.
Note that for a fixed pair $(g,q)$, superspecial curves over $\overline{\mathbb{F}_q}$ of genus $g$ are very rare:
the number of such curves is finite, whereas the whole set of curves over $\overline{\mathbb{F}_q}$ of genus $g$ has dimension $3 g - 3$.
% Superspecial curves over $\mathbb{F}_q$ of higher genus $g$ are more rarer that those of lower $g$.
Thus, finding superspecial curves over $\mathbb{F}_q$ of higher genus $g$ is more difficult than finding those of lower genus $g$.
% This work aims to find such a curve for a given genus.
\fi

% In the literature, there are many results on the existence and enumeration of superspecial/maximal curves.
% We here briefly review known results ...
% \begin{itemize}
% \item (Existence) It is shown in that for any non-negative integer $g$, there exists a maximal curve of genus $g$ over $\mathbb{F}_{q^2}$ for infinitely many $q$.
% \item (Enumeration) Enumeration means here that we count the number of isomorphism classes
% \end{itemize}

Finding maximal curves is an active research problem in the study of curves over finite fields, see e.g., \cite{Kawakita}, \cite{KTT}, \cite{Taf16}, \cite{Taf}, \cite{Taf3}, \cite{Taf2}, and \cite{Taf18}.
In particular, Tafazolian and Torres (cf.\ \cite{Taf16}, \cite{Taf}, \cite{Taf3}, \cite{Taf2}, \cite{Taf18}) characterize maximal curves of various types, e.g., hyperelliptic curves $y^2 = x^m + x$ and $y^2 = x^m + 1$, Fermat type $x^m + y^n = 1$, Hurwitz type $x^m y^a + y^n + x^b = 0$, other types such as $y^n = x^{\ell} (x^m + 1)$.
They use Serre's covering result (cf.\ \cite[Prop.\ 2.3]{KNT}, \cite{Lach}):
Any curve over $\mathbb{F}_{q^2}$ non-trivially $\mathbb{F}_{q^2}$-covered by a maximal curve over $\mathbb{F}_{q^2}$ is also a maximal curve over $\mathbb{F}_{q^2}$.
They prove the maximality of curves by finding subcovers of a maximal curve such as the Hermitian curve.

Another problem is the {\it enumeration} of superspecial (or maximal) curves.
Enumeration means here that we count the number of $K$-isomorphism classes of superspecial (or maximal) curves of genus $g$ over $\mathbb{F}_q$ for $K=\mathbb{F}_q$ or $\overline{\mathbb{F}_q}$.
If $g \leq 3$, some theoretical approaches to enumerate superspecial (or maximal) curves are available, and they are based on Torelli's theorem (cf.\ \cite{Deuring}, \cite[Prop.\ 4.4]{XYY16} for $g=1$, \cite{HI}, \cite{IK}, \cite{Serre1983} for $g=2$, and \cite{Hashimoto}, \cite{Ibukiyama} for $g=3$).
If $g \geq 4$, however, it is thought that these approaches are not so effective; different from the case of $g \leq 3$, the dimension of the moduli space of curves of genus $g$ is strictly less than that of the moduli space of principally polarized abelian varieties of dimension $g$.

To deal with the case of $g \geq 4$, alternative approaches based on both theory of algebraic geometry and computer algebraic methods are proposed in \cite{KH17-2}, \cite{KH17a}, \cite{KH16}, \cite{KH18a} and \cite{KH18}.
In \cite{KH18}, superspecial trigonal curves of genus $5$ over $\mathbb{F}_{q}$ are completely enumerated for $q = 5$, $5^2$, $7$, $7^2$, $11$ and $13$.
In particular, there are precisely four $\mathbb{F}_{11}$-isomorphism classes of superspecial trigonal curves.
Enumerated isomorphism classes are represented by the non-singular models of the singular plane curves $x y z^3 + s x^5 + t y^5  = 0$ with a singularity $[0:0:1]$ in $\mathbf{P}^2= \mathrm{Proj} (\overline{\mathbb{F}_{11}}[x,y,z])$ for some $s \in \mathbb{F}_{11}^{\times}$ and $t \in \mathbb{F}_{11}^{\times}$.
The non-singular model with $s=t=1$ is also an $\mathbb{F}_{11^2}$-maximal curve, see \cite[Remark 5.1.2]{KH18}.

In this note, we investigate the superspeciality of the non-singular model of $x y z^3 + s x^5 + t y^5 = 0$ with $s \in \mathbb{F}_{q}^{\times}$ and $t \in \mathbb{F}_{q}^{\times}$ for {\it arbitrary} $q$.
We also investigate the $\mathbb{F}_{p^2}$-maximality of the curve with $(s,t)=(1,1)$ for arbitrary $p > 11$.
Moreover, we consider curves of more general type: the non-singular model of the homogeneous equation $x^a + y^a + z^b x^c y^c = 0$ for natural numbers $a$, $b$ and $c$ with $b + 2 c = a$.
% Let $C_{p,a,b,c}$ be the non-singular model of $x^a + y^a + z^b x^c y^c = 0$.
Main results of this note are as follows:

\begin{theo}
Let $(s, t)$ be a pair of elements in $\mathbb{F}_{q}^{\times}$.
Put $F := x y z^3 + s x^5 + t y^5$.
Let $ V(F)$ denote the projective zero-locus in $\mathbf{P}^2 = \mathrm{Proj}(\overline{\mathbb{F}_{q}}[x,y,z])$ defined by $F=0$.
Then the desingularization $T_p$ of $V(F)$ is a superspecial trigonal curve of genus $5$ if and only if $p \equiv 2 \pmod{3}$ and $p \equiv 1, 4 \pmod{5}$.
\end{theo}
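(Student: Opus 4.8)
The plan is to present $T_p$ as a cyclic triple cover of $\mathbf{P}^1$, to exploit its large automorphism group, and to detect superspeciality through the Cartier operator. Projecting $V(F)$ away from its unique singular point $[0:0:1]$ (a node, since the leading form of $F$ there is $xy$) realizes the function field as $\overline{\mathbb{F}_q}(x)[z]/(xz^3+sx^5+t)$, i.e. $z^3=f(x)$ with $f(x)=-(sx^5+t)/x$. This degree-$3$ map is totally ramified over the five roots of $sx^5+t$, over $x=0$, and over $x=\infty$ (throughout $p\neq 3,5$, the primes where the cover degenerates; both lie outside the asserted congruence classes), so Riemann--Hurwitz gives $2g-2=3(-2)+7\cdot 2=8$, whence $g=5$ and $T_p$ is trigonal. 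A valuation count at these seven points shows that
\[
H^0(T_p,\Omega^1)=\Big\langle \tfrac{dx}{xz},\ \tfrac{dx}{z}\Big\rangle \oplus \Big\langle \tfrac{dx}{xz^2},\ \tfrac{dx}{z^2},\ \tfrac{x\,dx}{z^2}\Big\rangle,
\]
the two summands being the eigenspaces $W_1,W_2$ on which $\sigma:z\mapsto\zeta_3 z$ acts by $\zeta_3^{-1},\zeta_3^{-2}$, recovering $\dim=2+3=5=g$. Moreover $\tau:(x,z)\mapsto(\zeta_5 x,\zeta_5^{3}z)$ is an order-$5$ automorphism, so $\mu_{15}=\langle\sigma\rangle\times\langle\tau\rangle$ acts, and each basis form is a simultaneous eigenvector: the five forms carry the five distinct $\mu_5$-weights $w=i-3j+1\in\mathbb{Z}/5$ (for $x^iz^{-j}\,dx$), giving the holomorphic ``slots''
\[
H=\{(w,j)\}=\{(3,1),(2,1),(1,2),(0,2),(4,2)\}\subset \mathbb{Z}/5\times\mathbb{Z}/3.
\]

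I would then use that $T_p$ is superspecial if and only if the Cartier operator $\mathcal{C}$ vanishes on $H^0(T_p,\Omega^1)$. Because $\mathcal{C}$ is $p^{-1}$-linear and commutes with $\mu_{15}$, it carries the slot $(w,j)$ to $(p^{-1}w,\,p^{-1}j)=(aw,\,bj)$ with $a=p^{-1}\bmod 5$ and $b=p^{-1}\bmod 3$. Concretely, writing $x^iz^{-j}=(x^{c}z^{-j'})^{p}\,x^{\,i-pc}f^{\gamma}$ with $j'\equiv p^{-1}j\pmod 3$ and $3\gamma=pj'-j$, the projection formula reduces the computation to the base:
\[
\mathcal{C}\big(x^iz^{-j}\,dx\big)=x^{c}z^{-j'}\,\mathcal{C}\big(x^{\,i-pc}f(x)^{\gamma}\,dx\big),\qquad f^{\gamma}=(-1)^{\gamma}x^{-\gamma}\!\sum_{l}\binom{\gamma}{l}s^{l}t^{\gamma-l}x^{5l}.
\]
Extracting the unique surviving exponent $x^{kp-1}$ turns this into the congruence $5l\equiv\gamma-i-1\pmod p$. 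The decisive point is that $\gamma<p$ in every case (the largest value being $(2p-1)/3$), so $\gamma$ is a single base-$p$ digit: by Lucas' theorem $\binom{\gamma}{l}\not\equiv 0\pmod p$ for $0\le l\le\gamma$, and $[0,\gamma]$ meets the progression of solutions at most once. Hence each image is a \emph{single} term $\pm\binom{\gamma}{l}s^{l}t^{\gamma-l}\,x^{\kappa}z^{-j'}\,dx$, nonzero precisely when the target slot $(aw,bj)$ again lies in $H$ (and then, since $s,t\in\mathbb{F}_q^{\times}$, independently of $s,t$, as the statement demands).

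Superspeciality thus becomes the purely combinatorial condition that $(w,j)\mapsto(aw,bj)$ send \emph{every} slot of $H$ outside $H$. Writing $H$ as the level-$1$ weights $\{2,3\}$ together with the level-$2$ weights $\{0,1,4\}$, one reads off: if $b=1$ (that is $p\equiv 1\pmod 3$) then $(0,2)\mapsto(0,2)$ stays in $H$, so $\mathcal{C}\neq 0$ and $T_p$ is not superspecial; while if $b=2$ (that is $p\equiv 2\pmod 3$) the two levels are interchanged, and every image avoids $H$ if and only if $a$ preserves the partition $\{0,1,4\}\sqcup\{2,3\}$, i.e. $a\in\{1,4\}$, i.e. $p\equiv\pm 1\pmod 5$. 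This yields exactly the asserted equivalence.

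The part I expect to be routine is the ramification and valuation bookkeeping that pins down the basis and the eigencharacters. The genuine content, and the main obstacle, is the \emph{necessity} direction whenever a slot returns to $H$: there one must certify that the single surviving binomial coefficient is really nonzero, i.e. that the relevant $l$ lies in $[0,\gamma]$. For $p\equiv 1\pmod 3$ this is transparent, as $dx/z^2$ gives $\gamma=2(p-1)/3$ and the admissible value $l=(p-1)/3\in[0,\gamma]$, so $\mathcal{C}(dx/z^2)\neq 0$; the cases $p\equiv 2\pmod 3$ with $p\equiv 2,3\pmod 5$ are handled by the same elementary range estimate. Assembling the sufficiency (non-holomorphic targets force $\mathcal{C}=0$) with this necessity check completes the proof.
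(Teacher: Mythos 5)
Your proof is correct, and it takes a genuinely different route from the paper's. The paper never constructs the smooth model: it invokes the criterion quoted from \cite{KH18} (superspeciality of $T_p$ is equivalent to the vanishing of $25$ specific coefficients of $F^{p-1}$ for the singular plane quintic), expands $F^{p-1}$ multinomially, and reduces each coefficient to solvability in $[0,p-1]^{\oplus 3}$ of the system $a+b+c=p-1$, $a+5b=i$, $a+5c=j$, $3a=k$; row reduction modulo $5$ and modulo $3$ kills all solutions exactly when $p\equiv 2\pmod 3$ and $p\equiv \pm 1\pmod 5$, while in the remaining cases an explicit solution, namely $\left((p-1)/3,(p-1)/3,(p-1)/3\right)$ for $p\equiv 1\pmod 3$ and $\left((p-2)/3,\,4(2p-1)/15,\,(2p-1)/15\right)$ for $p\equiv 2\pmod 3$, $p\equiv 3\pmod 5$, produces a multinomial coefficient prime to $p$. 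Your argument---superelliptic model $z^3=-(sx^5+t)/x$, explicit basis of $H^0(\Omega^1)$, $\mu_{15}$-equivariance of the Cartier operator, Lucas' theorem---is the intrinsic counterpart of this computation: your weight slots in $\mathbb{Z}/5\times\mathbb{Z}/3$ are exactly the paper's mod-$5$/mod-$3$ reduction of its linear system, and your surviving binomial indices match its solutions (your $l_0=(2p-1)/15$ below is literally the paper's $c$). What your route buys is structure: it explains why the answer is a congruence modulo $15$ (the Cartier operator permutes the $\mu_{15}$-eigenlines by $p^{-1}$), it generalizes naturally to other superelliptic curves, and it gets irreducibility, the genus, and trigonality for free from Kummer theory and Riemann--Hurwitz, where the paper spends a brute-force factorization lemma and a citation to \cite{KH18} for the genus. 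What the paper's route buys is elementarity and uniformity: no Cartier theory on function fields, no group action, no differential bookkeeping---just integer linear algebra on the singular model, using machinery already established in \cite{KH18}. Two small points. First, your deferred necessity checks do go through as claimed: for $p\equiv 2\pmod 3$ and $p\equiv 3$ (resp.\ $2$) $\pmod 5$, take $\omega=dx/(xz)$, so $j'=2$, $\gamma=(2p-1)/3$, and the surviving index is $l_0=(2p-1)/15$ (resp.\ $l_0=(8p-1)/15$), which lies in $[0,\gamma]$; hence $\mathcal{C}(\omega)\neq 0$ by Lucas, as needed. Second, for the ``only if'' direction you should state explicitly that for $p=3,5$ (excluded by the congruences) $T_p$ is not a trigonal curve of genus $5$ at all---the cover is inseparable for $p=3$, and for $p=5$ the five branch points collapse to one so the genus drops---though the paper is equally terse on this point.
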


We directly prove Theorem \ref{thm:main-2} by computing the Hasse-Witt matrix of $T_p$.
Since $\mathbb{F}_{p^2}$-maximal curves are superspecial, the condition $p \equiv 2 \pmod{3}$ and $p \equiv 1, 4 \pmod{5}$ is necessary for $T_p$ to be $\mathbb{F}_{p^2}$-maximal.
In fact, the converse is true for $(s,t)=(1,1)$.

\begin{theo}
Let $V(F)$ denote the projective zero-locus in $\mathbf{P}^2$ defined by $F=x y z^3 + x^5 + y^5=0$.
Then the desingularization $T_p$ of $V(F)$ is an $\mathbb{F}_{p^2}$-maximal trigonal curve of genus $5$ if and only if $p \equiv -1 \pmod{15}$ or $p \equiv 11 \pmod{15}$.
\end{theo}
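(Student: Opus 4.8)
The statement is an equivalence, and one direction is immediate. The plan is to dispose of it first: if $T_p$ is $\mathbb{F}_{p^2}$-maximal then it is superspecial (every $\mathbb{F}_{p^2}$-maximal curve is), so Theorem \ref{thm:main-2} forces $p \equiv 2 \pmod 3$ and $p \equiv 1, 4 \pmod 5$. A direct check modulo $15$ shows that this pair of congruences is exactly $p \equiv -1 \pmod{15}$ or $p \equiv 11 \pmod{15}$: these are precisely the residues that are $\equiv 2 \pmod 3$ and $\equiv 1$ or $4 \pmod 5$. This gives the ``only if'' direction and, incidentally, shows that for $(s,t)=(1,1)$ superspeciality and $\mathbb{F}_{p^2}$-maximality are governed by the same congruences; the real content is therefore that a superspecial $T_p$ is here never minimal nor mixed, but genuinely maximal.

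For the ``if'' direction I would first put $T_p$ in a convenient normal form. On the chart $y \neq 0$ the equation $xyz^3 + x^5 + y^5 = 0$ becomes $xz^3 + x^5 + 1 = 0$; setting $w = xz$ and absorbing the cube $-1=(-1)^3$ yields the superelliptic model $y^3 = x^{2}(x^5+1)$, i.e.\ $T_p$ is the smooth model of a curve of the Tafazolian--Torres type $y^n = x^{\ell}(x^m+1)$ with $(n,\ell,m)=(3,2,5)$. The hypothesis $p \equiv -1, 11 \pmod{15}$ forces $\gcd(p,15)=1$, so the degree-$3$ map $T_p \to \mathbf{P}^1$, $(x,y)\mapsto x$, is tame; its branch locus is $x=0$, the five roots of $x^5=-1$, and $x=\infty$, each totally ramified. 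Since $p \equiv \pm 1 \pmod 5$ gives $\mu_5 \subset \mathbb{F}_{p^2}$, all seven branch points are $\mathbb{F}_{p^2}$-rational, so $T_p$ has exactly seven rational points above them. Counting the remaining fibres by the cubic residue symbol, I obtain $\#T_p(\mathbb{F}_{p^2}) = p^2 + 1 + S + \overline{S}$, where $\chi$ is a cubic character of $\mathbb{F}_{p^2}^{\times}$ and $S = \sum_{x \in \mathbb{F}_{p^2}} \chi\!\left(-x^2(x^5+1)\right)$.

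The next step is to turn $S$ into Jacobi sums. Using $\chi(x)^2=\chi(x)^5=\chi(x^5)$ one has $S = \chi(-1)\sum_x \chi(x^5)\chi(x^5+1)$; substituting $v = x^5$ (a $5$-to-$1$ map onto the fifth powers, as $5 \mid p^2-1$) and expanding the indicator of the fifth powers through a character $\psi$ of order $5$ gives, after $v \mapsto -v$ together with $\psi^j(-1)=1$ and $\chi(-1)^2=1$, the clean formula $S = \sum_{j=0}^{4} J(\psi^j\chi,\chi)$, where $J$ denotes the Jacobi sum. For every $j$ the three characters $\psi^j\chi$, $\chi$, $\psi^j\chi^2$ are nontrivial, so each $|J(\psi^j\chi,\chi)| = p$; hence $S+\overline S \le 10p$ with equality if and only if $J(\psi^j\chi,\chi) = p$ for all $j$. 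Thus $\mathbb{F}_{p^2}$-maximality of $T_p$ reduces to the five Jacobi-sum evaluations $J(\psi^j\chi,\chi) = p$, $0 \le j \le 4$.

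It remains to evaluate these Jacobi sums, and this is where the two residue classes split; the mixed class is the main obstacle. The term $j=0$ is $J(\chi,\chi)$ for a cubic $\chi$ with $3 \mid p+1$, which equals $p$ by the classical purity of cubic Gauss sums over $\mathbb{F}_{p^2}$ (the reason the Hermitian/Fermat cubic is maximal). When $p \equiv -1 \pmod{15}$, every character occurring has order dividing $15 \mid p+1$, so all Gauss sums are pure and a standard Stickelberger sign computation gives $J(\psi^j\chi,\chi)=p$ for all $j$; equivalently, one exhibits $T_p$ as a subcover of the Fermat (hence Hermitian) curve of degree dividing $p+1$ and invokes Serre's covering result. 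The hard part is the mixed class $p \equiv 11 \pmod{15}$, where $3 \mid p+1$ but $5 \mid p-1$: now $\psi$ is the pullback $\psi_0 \circ N_{\mathbb{F}_{p^2}/\mathbb{F}_p}$ of an order-$5$ character of $\mathbb{F}_p^{\times}$, the order-$5$ Gauss sums are not individually pure, and the order-$15$ characters $\psi^j\chi$ satisfy neither $p \equiv 1$ nor $p \equiv -1$ modulo their order. Here I would write $J(\psi^j\chi,\chi)=g(\psi^j\chi)\,g(\chi)/g(\psi^j\chi^2)$, use the Hasse--Davenport lifting relation to control the order-$5$ contributions of $g(\psi^j\chi)$ and $g(\psi^j\chi^2)$ (whose $\psi^j$-parts must cancel between numerator and denominator), and combine this with the purity of the order-$3$ factor to show the product still collapses to $+p$. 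Establishing this cancellation cleanly --- or, alternatively, producing an explicit $\mathbb{F}_{p^2}$-covering of $T_p$ by the Hermitian curve that works in the mixed case and appealing to Serre's result --- is the crux of the proof.
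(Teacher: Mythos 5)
Your ``only if'' direction is fine and coincides with the paper's (maximal $\Rightarrow$ superspecial $\Rightarrow$ Theorem \ref{thm:main-2} $\Rightarrow$ the two residue classes mod $15$), and your reduction of maximality to character sums is correct: the model $y^3=x^2(x^5+1)$, the count $\#T_p(\mathbb{F}_{p^2})=p^2+1+S+\overline{S}$, the identity $S=\sum_{j=0}^{4}J(\psi^j\chi,\chi)$, and the observation that maximality is equivalent to $J(\psi^j\chi,\chi)=p$ for all $j$ are all sound, and the class $p\equiv-1\pmod{15}$ does follow from the classical evaluation of Jacobi sums of characters whose orders divide $p+1$ (equivalently, from a Fermat subcover, which is the paper's route in that case).

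However, there is a genuine gap: the case $p\equiv 11\pmod{15}$ is not proved, and you say so yourself (``the crux''). Your sketched Gauss-sum strategy does not go through as stated. Hasse--Davenport lifting applies to $\psi^j$, which is a norm lift from $\mathbb{F}_p^\times$, but it says nothing about $g(\psi^j\chi)$ or $g(\psi^j\chi^2)$: these are characters of order $15$, which divides neither $p-1$ nor $p+1$, so they are neither norm lifts nor pure, and Gauss sums do not factor multiplicatively, so there is no ``$\psi^j$-part'' to cancel between numerator and denominator. In effect, the assertion $J(\psi^j\chi,\chi)=p$ in the mixed case is (by your own reduction) equivalent to the theorem, so you have reformulated the hard half rather than proved it. The paper closes exactly this case by your second, unexecuted alternative: it uses the Hermitian curve in the form $Y^p+Y=-Z^{p+1}$ rather than the Fermat form. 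Writing $p+1=15m+12$, the map $(Y,Z)\mapsto\bigl(y,z\bigr)=\bigl(Y^{3m+2},\,Y^{-(m+1)}Z^{5m+4}\bigr)$ satisfies
\begin{equation}
y z^3 = Y^{-1}Z^{p+1} = -Y^{-1}(Y^p+Y) = -(Y^{p-1}+1) = -(y^5+1), \nonumber
\end{equation}
since $Y^{p-1}=Y^{15m+10}=(Y^{3m+2})^5=y^5$; hence $T_p$ is $\mathbb{F}_{p^2}$-covered by $\mathcal{H}_{p+1}$ and Serre's covering result applies. Finding this explicit covering (or some equivalent evaluation of the order-$15$ Jacobi sums, e.g.\ via Gross--Koblitz) is precisely the missing content of your proposal.
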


We prove Theorem \ref{thm:general} below by constructing explicit equations for subcovers of the Hermitian curve, similarly to methods in \cite{Taf16}, \cite{Taf}, \cite{Taf3}, \cite{Taf2} and \cite{Taf18}.

\begin{theo}
Let $a$, $b$ and $c$ be natural numbers with $b + 2 c = a$.
Put $n := a b / \mathrm{gcd}(a,b,c)$.
Let $C_{p,a,b,c}$ be the non-singular model of $x^a + y^a + z^b x^c y^c = 0$.
Then we have the following:
\begin{enumerate}
\item[$(1)$] The curve $C_{p,a,b,c}$ is maximal over $\mathbb{F}_{p^2}$ if $p \equiv -1 \pmod{n}$.
\item[$(2)$] Suppose that $a$ and $b$ are coprime.
Since there exists an integer $d$ such that $d \equiv 2 \pmod{a}$ and $d \equiv 0 \pmod{b}$, and since any two such $d$ are congruent modulo $a b$, we fix such a $d$ in the interval $[0, a b)$. 
Then the curve $C_{p,a,b,c}$ is maximal over $\mathbb{F}_{p^2}$ if $p \equiv d-1 \pmod{n}$.
\end{enumerate}
\end{theo}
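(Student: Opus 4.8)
The plan is to prove both assertions through Serre's covering result quoted in the introduction: for each case it suffices to produce a maximal curve over $\mathbb{F}_{p^2}$ together with a non-constant $\mathbb{F}_{p^2}$-rational map \emph{onto} $C_{p,a,b,c}$, since a curve non-trivially $\mathbb{F}_{p^2}$-covered by a maximal curve is maximal. Throughout I would work on the affine model obtained by dehomogenizing at $y=1$: the curve $C_{p,a,b,c}$ is birational over the prime field to $z^{b}x^{c}+x^{a}+1=0$, i.e. $z^{b}=-(x^{a}+1)x^{-c}$, which exhibits it as a cyclic cover of degree $b$ of the $x$-line. All covering maps will be written on this model and verified as identities in the function field.

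For part $(1)$ I would take the Fermat curve $F_{n}\colon U^{n}+V^{n}+W^{n}=0$ with $n=ab/\gcd(a,b,c)$, which is $\mathbb{F}_{p^2}$-maximal exactly when $n\mid p+1$, hence whenever $p\equiv -1\pmod n$. Writing $g=\gcd(a,b,c)$, the assignment $x=U^{b/g}$, $y=V^{b/g}$, $z=W^{a/g}(UV)^{-c/g}$ has integral exponents (as $g$ divides each of $a,b,c$) and satisfies $x^{a}+y^{a}+z^{b}x^{c}y^{c}=U^{n}+V^{n}+W^{n}$, so it defines a non-constant $\mathbb{F}_{p^2}$-map $F_{n}\to C_{p,a,b,c}$; Serre's result then yields maximality. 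This step is essentially routine once the exponents are guessed.

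The substance is in part $(2)$. Here $\gcd(a,b)=1$ forces $\gcd(a,b,c)=1$ and $n=ab$, so the first move is to unwind the congruence: by the Chinese Remainder Theorem, $p\equiv d-1\pmod{ab}$ with $d\equiv 2\pmod a$ and $d\equiv 0\pmod b$ is equivalent to $p\equiv 1\pmod a$ and $p\equiv -1\pmod b$, i.e. $a\mid p-1$ and $b\mid p+1$. Since now $ab\nmid p+1$ in general, the Fermat curve of part $(1)$ is no longer maximal, so I would instead use $\mathcal{X}\colon w^{b}=u^{p}+u$, which is a subcover of the Hermitian curve $y^{p+1}=u^{p}+u$ via $w=y^{(p+1)/b}$ (legitimate because $b\mid p+1$) and is therefore $\mathbb{F}_{p^2}$-maximal. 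I would then set $\rho=(p-1)/a$ and $\tau=-(c\rho+1)/b$ and propose the map $x=u^{\rho}$, $z=-\,w\,u^{\tau}$; using $u^{p-1}=x^{a}$ a direct computation gives $z^{b}=-(x^{a}+1)x^{-c}$, so this is a non-constant $\mathbb{F}_{p^2}$-map $\mathcal{X}\to C_{p,a,b,c}$ (non-constant since $\rho\geq 1$), and maximality follows once more from Serre's result.

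The main obstacle, and the only place where the defining relation $b+2c=a$ and the precise congruence are genuinely used, is verifying that $\tau=-(c\rho+1)/b$ is an integer, i.e. that $b\mid c\rho+1$. I would prove this by reducing modulo $b$: from $a=b+2c$ one gets $a\equiv 2c\pmod b$, while $p\equiv -1\pmod b$ gives $p-1\equiv -2\pmod b$, whence $\rho\equiv (p-1)a^{-1}\equiv -2a^{-1}\pmod b$ and therefore $c\rho\equiv -2ca^{-1}\equiv -a\,a^{-1}\equiv -1\pmod b$, so $c\rho+1\equiv 0\pmod b$. The invertibility of $a$ modulo $b$ is exactly where $\gcd(a,b)=1$ enters, and the oddness of $b$ (forced by $a=b+2c$ together with $\gcd(a,b)=1$) lets me take the constant $-1$ as a $b$-th root of $-1$ in $\mathbb{F}_{p^2}$, so that the map is defined over $\mathbb{F}_{p^2}$. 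Once this integrality is in hand, the remainder of part $(2)$ is the same formal verification as part $(1)$.
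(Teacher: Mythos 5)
Your proposal is correct and is essentially the paper's own proof: both parts apply Serre's covering result to explicit monomial subcovers of the Hermitian curve, and your maps, once composed with the coverings $\mathcal{H}_{p+1} \to F_n$ and $\mathcal{H}_{p+1} \to \mathcal{X}$, reproduce exactly the paper's exponents (e.g.\ in part (2), your $\rho = (p-1)/a$ equals the paper's $bm + (d-2)/a$ and your $\tau = -(c(p-1)+a)/(ab)$ equals the paper's $-cm - (c(d-2)+a)/(ab)$, where $p+1 = abm + d$). The only differences are organizational: you factor through the intermediate maximal curves $U^n+V^n+W^n=0$ and $w^b = u^p + u$ instead of mapping directly from $\mathcal{H}_{p+1}$, and you verify integrality of the exponents via the CRT reformulation $a \mid p-1$, $b \mid p+1$ rather than the paper's divisibility observation that $c(d-2)+a$ is divisible by $ab$.
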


% If $b$ and $c$ are co-prime, we can prove the second statement by applying the maximality of given in ...

\if 0

In the case of {\it nonhyperelliptic} curves of genus $g=4$, Fuhrmann-Garcia-Torres proved in \cite{FGT} that there exists a maximal (and superspecial) curve $C_{0}$ of $g=4$ over $K= \F_{5^2}$, and that it gives a unique $\overline{K}$-isomorphism class.
In \cite{KH16}, \cite{KH17-2} and \cite{KH17a}, the isomorphism classes of superspecial nonhyperelliptic curves of genus $4$ over finite fields are enumerated in characteristic $p \leq 11$.
% Note that all the maximal curves over $K=\mathbb{F}_{5^2}$ enumerated in \cite{KH16} are included in the unique isomorphism class of $C_0$ over $\overline{K}$.
Results in \cite{KH16}, \cite{KH17-2} and \cite{KH17a} also show that there exist superspecial nonhyperelliptic curves of genus $4$ in characteristic $5$ and $11$, whereas there does not exist such a curve in characteristic $7$.

The objective of this note is to investigate whether a superspecial nonhyperelliptic curve of genus $g=4$ exists or not for $p \geq 13$.
In contrast to the rarity of superspecial curves of higher genus, our main results (Theorem \ref{thm:main} and Corollary \ref{cor:main} below) show the existence of superspecial curves of genus $g=4$ in characteristic $p$ for half of the primes as well as the case of $g=1$.
%Here is our main result:

We prove Theorem \ref{thm:main} by simple computations in linear and fundamental commutative algebra and in combinatorics together with results in \cite{KH16}, \cite{KH17-2} and \cite{KH17a} (so this note also complements results in these three previous papers).
As a corollary of this theorem, we have the following:

%the density of the set of primes for which our curves are superspecial is $1/2$.

Theorem \ref{thm:main} and Corollary \ref{cor:main} also give a partial answer to the genus $4$ case of the problem proposed by Ekedahl in 1987, see p.\ 173 of \cite{Ekedahl}.
In Section \ref{sec:comp}, we give a table of the number of $\mathbb{F}_{p^2}$-rational points on $C_p$ for $3 \leq p \leq 269$ obtained by using a computer algebra system Magma~\cite{Magma}.
As computational results, we found maximal nonhyperelliptic curves of genus $4$ over $\mathbb{F}_{p^2}$.
Specifically, we have that for all $3 \leq p \leq 269$ with $p \equiv 2 \pmod 3$, the curves $C_p$ are maximal over $\mathbb{F}_{p^2}$.
% The following is our conjecture on the existence of $\mathbb{F}_{p^2}$-maximal nonhyperelliptic curves of genus $4$.

% \begin{conjec}
% For any $p$ with $p \equiv 2 \pmod{3}$, the curve $C_p$ over $\mathbb{F}_{p^2}$ is maximal.
% \end{conjec}
\fi

\subsection*{Acknowledgments}
I thank Shushi Harashita for his comments on the preliminary version of this note.
I also thank Bradley Brock for sending me his thesis~\cite{Brock}.
From his thesis, I learned much about superspecial curves.
% He gave the author information on the existence of superspecial curves of genus $g$ over $\mathbb{F}_q$ in the case of $g \leq 3$, in the case of $(g,q)=(4,13)$, and in the hyperelliptic case.
% He also pointed out that computing the rational points of our curves is reduced into solving a diagonal equation.

\if 0
%===========================
\section{Proof of Conjecture \ref{conje}}
%===========================

Recall that our curve $C_p$ with $p > 3$ is isomorphic to $8 X^3 + 8 y^6 - z^6 = 0$ over $\mathbb{F}_{p^2}$.
Putting $z = 1$, we have $8 X^3 + 8 y^6 = 1$.
Considering the isomorphism $(X, y) \mapsto ( 2^{-1} X, ( \sqrt{2} )^{-1} y )$ over $\mathbb{F}_{p^2}$, this curve is $\mathbb{F}_{p^2}$-isomorphic to $X^3 + y^6 = 1$.
Since $\mathrm{gcd}(p,3) = \mathrm{gcd}(p,6) =1$, it follows from the following known theorem (Theorem \ref{thm:fermat}) that $X^3 + y^6 = 1$ is maximal over $\mathbb{F}_{p^2}$ if and only if $p + 1 \equiv 0 \pmod{3}$, i.e., $p \equiv 2 \pmod{3}$.

\begin{thm}[\cite{Taf3}, Theorem 5]\label{thm:fermat}
Let $q$ be a power of prime $p$.
Let $m > 1$ and $n > 1$ be integers with $\mathrm{gcd}(p,n) = \mathrm{gcd}(p,m) =1$.
Then the smooth complete curve defined by $x^n + y^m = 1$ is maximal over $\mathbb{F}_{q^2}$ if and only if both $n$ and $m$ divide $q+1$.
\end{thm}
\fi

\if 0
%===========================
\section{Superspecial trigonal curves defined by $x y z^3 + x^5 + y^5 = 0$ }
%===========================

\begin{thm}\label{thm:main-2}
Let $(s, t)$ be a pair of elements in $\mathbb{F}_{q}^{\times}$.
Put $F := x y z^3 + s x^5 + t y^5$.
Let $C_{p,s,t} = V(F)$ denote the projective zero-locus in $\mathbf{P}^2 = \mathrm{Proj}(\overline{K}[x,y,z])$ defined by $F=0$.
Then the desingularization of $C_{p,s,t}$ is a superspecial trigonal curve of genus $5$ if and only if either of the following conditions holds:
$(1)$ $p \equiv 2 \pmod{3}$ and $p \equiv 1 \pmod{5}$, $(2)$ $p \equiv 2 \pmod{3}$ and $p \equiv 1 \pmod{5}$ $($equivalently $p \equiv -1 \pmod{15})$.
\end{thm}

\if 0
\begin{proof}
% Recall from Lemma \ref{lem:sing} that $C_p$ is singular if $p=3$, and non-singular if $p>3$.
% We may assume $p>3$.
Since $C_p$ is the set of the zeros of the irreducible quintic form $F$ over $K$, it is a trigonal curve of genus $5$ over $K$, see \cite[Section 2]{KH18}.
It follows from Propositions \ref{prop:main1-2}, \ref{prop:main-2-2} and \ref{prop:main-2} that the desingularization of $C_p$ is superspecial if and only if $p \equiv 2 \pmod{3}$ and $p \equiv 1, 4 \pmod{5}$.
\end{proof}
\fi

\begin{cor}\label{cor:main-2}
There exist superspecial trigonal curves of genus $5$ in characteristic $p$ for infinitely many primes $p$.
The set of primes $p$ for which $C_p$ is superspecial has natural density $1/4$. 
\end{cor}

Putting $x = 1$ in $x y z^3 + x^5 + y^5 = 0$, we have $1 + y^5 = - z^3 y$.
Considering $z \mapsto - X$, the curve is $\mathbb{F}_{p^2}$-isomorphic to $1 + y^5 = X^3 y$.
Putting $a = 5$, $b=3$, $c=1$ and $n := a b / \mathrm{gcd}(a, b, c)$, we  have $n = 15$.
It follows from \cite[Corollary 2.17 (b)]{Brock} that the curve is superspecial if $p \equiv - 1 \pmod{15}$, equivalently $p \equiv 2 \pmod{3}$ and $p \equiv 4 \pmod{5}$.
\fi

%=======================================================
\section{Superspeciality of the trigonal curve $T_p$}\label{sec:pre-2}
%=======================================================

As in the previous section, let $K$ be a perfect field of characteristic $p>2$.
Let $K[x,y,z]$ denote the polynomial ring of the three variables $x$, $y$ and $z$.
As an example of a superspecial curve of genus $g=5$ in characteristic $11$, we have the desingularization of the projective variety $x y z^3 + s x^5 + t y^5 = 0$ with $(s, t) \in (K^{\times})^2$ in the projective plane $\mathbf{P}^2 = \mathrm{Proj}(\overline{K}[x,y,z])$, see \cite[Theorem B]{KH18}.

In this section, we shall prove that the desingularization of the variety $x y z^3 + s x^5 + t y^5 = 0$ over $K$ is (resp.\ not) a superspecial curve of genus $5$ if $p \equiv 2 \pmod{3}$ and $p \equiv 1 \pmod{5}$ or if $p \equiv 2 \pmod{3}$ and $p \equiv 4 \pmod{5}$ (resp.\ otherwise).
Throughout this section, we set $F := x y z^3 + s x^5 + t y^5$.
Let $T_p$ denote the non-singular model of the projective variety $V(F)$ in $\mathbf{P}^2$ defined by $F= 0$.

%============================
\subsection{Singularity of $V(F)$}
%============================

First, we prove that the variety $V(F)$ has a unique singular point if $p > 5$.

\begin{lem}\label{lem:sing-2}
If $p > 5$ $($resp.\ $p=5)$, then the variety $V(F)$ has a unique singular point $[0:0:1]$ $($resp.\ at least two singular points$)$.
\end{lem}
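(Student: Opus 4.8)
The plan is to locate all singular points of $V(F)$ by solving the system $F = F_x = F_y = F_z = 0$ over $\overline{K}$, where subscripts denote partial derivatives. Computing these,
\begin{align*}
F_x &= yz^3 + 5sx^4, & F_y &= xz^3 + 5ty^4, & F_z &= 3xyz^2.
\end{align*}
Since $F$ is homogeneous of degree $5$, Euler's identity gives $xF_x + yF_y + zF_z = 5F$; hence when $p > 5$ the vanishing of the three partials already forces $F=0$, so it suffices to intersect $F_x, F_y, F_z$.

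First I would treat the case $p>5$, where $3$ and $5s, 5t$ are all nonzero. The equation $F_z = 3xyz^2 = 0$ forces $x=0$, $y=0$, or $z=0$, and I would run through these three possibilities. If $z=0$, then $F_x = 5sx^4 = 0$ and $F_y = 5ty^4 = 0$ give $x=y=0$, which is not a projective point; if $x=0$, then $F_y = 5ty^4 = 0$ forces $y=0$, leaving $[0:0:1]$; the case $y=0$ is symmetric and again yields $[0:0:1]$. A direct check shows all partials (and $F$) vanish at $[0:0:1]$, so it is the unique singular point.

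The case $p=5$ is where the behaviour changes, and this is the crux. Here $5s = 5t = 0$, so the partials collapse to $F_x = yz^3$, $F_y = xz^3$, and $F_z = 3xyz^2$ (with $3 \neq 0$). On the open set $z \neq 0$ the same analysis isolates $[0:0:1]$ as before. But now the entire line $z = 0$ annihilates all three partials, so every point of $V(F) \cap \{z = 0\}$ is singular. Restricting $F$ to $z=0$ yields $sx^5 + ty^5 = 0$; since the Frobenius $u \mapsto u^5$ is a bijection on $\overline{K}$, the equation $x^5 = -(t/s)\,y^5$ has a nonzero solution, producing a further singular point $[x_0 : 1 : 0]$ distinct from $[0:0:1]$. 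Thus $V(F)$ has at least two singular points when $p=5$.

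I expect the only genuine subtlety to be the $p=5$ case: the disappearance of the $5sx^4$ and $5ty^4$ terms makes the whole line $z=0$ part of the singular locus on $V(F)$, which is precisely the degeneration one must flag (and where Euler's identity no longer supplies $F=0$ for free, so the restriction $sx^5+ty^5=0$ must be imposed by hand). The $p>5$ argument, by contrast, is a routine case split driven by $F_z=0$.
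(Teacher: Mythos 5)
Your proof is correct, and it identifies exactly the same singular points as the paper, but the key uniqueness step for $p>5$ is carried out by a different method. The paper argues ideal-theoretically: writing $f_1, f_2, f_3$ for the entries of the Jacobian, it exhibits explicit combinations such as $F - xf_1 - 5^{-1}yf_2 + 5^{-1}xf_1 = -3sx^5$ (and similarly $-3ty^5$), concluding that $x$ and $y$ lie in the radical of $\langle F, J(F)\rangle$, so the singular locus is supported inside $\{x=y=0\}=\{[0:0:1]\}$. You instead invoke Euler's identity $xF_x+yF_y+zF_z=5F$ to reduce to the common zeros of the partials and then run a pointwise case analysis on $F_z=3xyz^2=0$; this is more elementary and arguably more transparent, at the cost of an explicit case split, whereas the paper's polynomial certificates settle the containment in one stroke and describe the singular locus scheme-theoretically. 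For $p=5$ the two proofs produce the same point: your $[x_0:1:0]$ with $x_0^5=-t/s$ is projectively equal to the paper's $[s^{-1/5}:-t^{-1/5}:0]$ (indeed in characteristic $5$ one has $sx^5+ty^5=(s^{1/5}x+t^{1/5}y)^5$, so the line $z=0$ meets $V(F)$ in exactly this one point). Here your write-up is in fact more complete than the paper's: the paper merely asserts that this point is singular, while you explain why the entire line $z=0$ annihilates all three partials once the terms $5sx^4$ and $5ty^4$ vanish, and you also verify explicitly that $[0:0:1]$ remains singular, a fact the paper leaves implicit when claiming ``at least two'' singular points.
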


\begin{proof}
Let $J (F)$ denote the set of all the elements of the Jacobian matrix
\[
\left( \begin{array}{ccc}
	\frac{\partial F}{\partial x} & \frac{\partial F}{\partial y} & \frac{\partial F}{\partial z}
	\end{array} \right)
	=
	\left( \begin{array}{ccc}
	y z^3 + 5 s x^4 & x z^3 + 5 t y^4 &  3 x y z^2  \\
	\end{array} \right) .
\]
Namely, the set $J (F)$ consists of the following $3$ elements: $f_1 = y z^3 + 5 s x^4$, $f_2 =  x z^3 + 5 t y^4$ and $f_3 :=3 x y z^2$.
Assume $p > 5$.
It suffices to show that $x$ and $y$ belong to the radical of the ideal generated by $F$ and $J (F)$.
By straightforward computations, we have
\begin{eqnarray}
F - x f_1 - 5^{-1} y f_2 + 5^{-1} x f_1 & = & - 3 s x^5 \nonumber \\
F - y f_2 - 5^{-1} x f_1 + 5^{-1} y f_2 & = & - 3 t y^5 \nonumber
%  (5 ^{-1 }s^{-1} x) f_1 - (3^{-1}s^{-1} z)  f_3 & = & x^5, \nonumber \\
% (5^{-1} t^{-1} y) f_2 - (3^{-1}t^{-1} z)  f_3 & = & y^5, \nonumber 
\end{eqnarray}
which belong to the ideal $\langle F, J (F) \rangle$ in $K[x,y,z]$.
Thus, $x$ and $y$ belong to its radical.

If $p = 5$, then the point $[s^{-1/5}: - t^{-1/5}:0]$ on $V(F)$ is a singular point over the algebraic closure $\overline{\mathbb{F}_{5}}$ for each $(s,t) \in K^{\oplus 2} \smallsetminus \{ (0,0) \}$.
\end{proof}

%=============================
\subsection{Irreducibility of $F$}
%=============================
In this subsection, we show that the variety $V (F)$ with $F = x y z^3 + s x^5 + t y^5$ is irreducible for $s$ and $t \in K^{\times}$, equivalently the quintic form $F$ is irreducible over the algebraic closure $\overline{K}$.

\begin{lem}
The quintic form $F = x y z^3 + s x^5 + t y^5$ is irreducible for $s$ and $t \in K^{\times}$ over the algebraic closure.
\end{lem}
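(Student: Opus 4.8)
The plan is to view $F$ as a cubic polynomial in the single variable $z$ over the unique factorization domain $\overline{K}[x,y]$, namely
\[
F \;=\; (xy)\,z^{3} \,+\, 0\cdot z^{2} \,+\, 0\cdot z \,+\, (s x^{5} + t y^{5}),
\]
and to prove irreducibility in this one-variable setting. First I would record that $F$ is primitive as an element of $\overline{K}[x,y][z]$: its coefficients are $xy$ and $sx^{5}+ty^{5}$, and since $s,t\in K^{\times}$ neither $x$ nor $y$ divides $sx^{5}+ty^{5}$, so $\gcd(xy,\,sx^{5}+ty^{5})=1$. By Gauss's lemma it then suffices to prove that $F$ is irreducible over the fraction field $\overline{K}(x,y)$; because $F$ is homogeneous, irreducibility in $\overline{K}[x,y,z]$ is the same as irreducibility of the quintic form $F$, which is the assertion of the lemma (equivalently, the irreducibility of $V(F)$).

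For $p\neq 5$ the quickest route is Eisenstein's criterion. In this case the binary form $sx^{5}+ty^{5}$ is separable, factoring as $s\prod_{i=1}^{5}(x-\alpha_{i}y)$ with distinct roots $\alpha_{i}$ satisfying $\alpha_{i}^{5}=-t/s$; in particular every $\alpha_{i}$ is nonzero. Taking the prime element $\pi:=x-\alpha_{1}y$ of $\overline{K}[x,y]$, I would check the three conditions against the coefficient list $(xy,\,0,\,0,\,sx^{5}+ty^{5})$: the leading coefficient $xy$ is not divisible by $\pi$ (here we use $\alpha_{1}\neq 0$, so $\pi$ is an associate of neither $x$ nor $y$); the lower coefficients $0,0,sx^{5}+ty^{5}$ are all divisible by $\pi$; and $\pi^{2}\nmid sx^{5}+ty^{5}$ by separability. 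Eisenstein then gives irreducibility over $\overline{K}(x,y)$, and primitivity together with Gauss's lemma finishes this case.

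The main obstacle is the degenerate characteristic $p=5$, where $sx^{5}+ty^{5}=(s^{1/5}x+t^{1/5}y)^{5}$ (using that $K$ is perfect) is a fifth power, so the condition $\pi^{2}\nmid sx^{5}+ty^{5}$ fails and Eisenstein no longer applies. To close this case — and indeed to give a single argument valid for all $p>2$ — I would fall back on a direct, $z$-graded factorization analysis. First one rules out linear factors: a factor of $z$-degree $0$ or $1$ forces, by matching the coefficient of $z^{3}$ (which equals $xy$), either a degree mismatch or a relation of the form $a^{3}xy=b^{3}(sx^{5}+ty^{5})$, impossible since $x\nmid sx^{5}+ty^{5}$. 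Hence any nontrivial factorization reads $F=Q_{2}Q_{3}$ with $Q_{2}$ irreducible of degree $2$ and $Q_{3}$ irreducible of degree $3$, and the $z$-degree of $Q_{2}$ is $0$ or $1$. If it is $0$, the $z^{3}$-coefficient equation $Q_{2}\cdot(\text{const})=xy$ forces $Q_{2}=c\,xy$, which is reducible; if it is $1$, the leading $z$-coefficients multiply to $xy$, forcing one of them to be a scalar multiple of $x$ and the other of $y$, after which the vanishing of an intermediate $z$-coefficient yields $x\mid A$ (resp.\ $y\mid A$) for the degree-$2$ piece $A$ dividing $sx^{5}+ty^{5}$, contradicting $x,y\nmid sx^{5}+ty^{5}$. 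The advantage of this second argument is that it never invokes separability, only the hypothesis $s,t\neq 0$, so it removes the $p=5$ exception and makes the conclusion uniform.
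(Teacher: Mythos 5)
Your route is genuinely different from the paper's: the paper writes out a completely general factorization into homogeneous forms of degrees $(1,4)$ and $(2,3)$ with indeterminate coefficients and kills each case by brute-force comparison of all coefficients, whereas you work in $\overline{K}[x,y][z]$, check primitivity ($\gcd(xy,\,sx^5+ty^5)=1$), and invoke Gauss's lemma plus Eisenstein's criterion at the prime $\pi = x-\alpha_1 y$. For $p \neq 5$ this Eisenstein argument is correct and considerably cleaner than the paper's computation, and you correctly identify that it breaks exactly at $p=5$, where $sx^5+ty^5$ is a fifth power and the condition $\pi^2 \nmid sx^5+ty^5$ fails.

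However, your fallback argument --- which is the \emph{only} argument you have covering $p=5$, a case the lemma includes since the section assumes only that $K$ is perfect of characteristic $p>2$ --- has a genuine gap in its case analysis. In the factorization $F=Q_2Q_3$ with $\deg Q_2=2$, $\deg Q_3=3$, you assert without justification that the $z$-degree of $Q_2$ is $0$ or $1$; but $\deg_z Q_2=2$ is perfectly possible a priori, namely $Q_2=\lambda z^2+l_1z+l_2$ with $\lambda\in \overline{K}^{\times}$ and $\deg l_i=i$, which forces $\deg_z Q_3=1$, say $Q_3=m_2z+m_3$ with $\deg m_2 = 2$, $\deg m_3 = 3$. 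Your stated mechanism for $z$-degree $1$ ("the leading $z$-coefficients multiply to $xy$, forcing one to be a scalar multiple of $x$ and the other of $y$") does not apply here: the leading $z$-coefficients are $\lambda$ and $m_2$, and the equation $\lambda m_2=xy$ merely gives $m_2=\lambda^{-1}xy$, with no contradiction yet. The missing case does succumb to the same elimination you use elsewhere: the $z^2$-coefficient equation $\lambda m_3+l_1m_2=0$ gives $m_3=-\lambda^{-2}l_1xy$, and then the $z^0$-coefficient equation $l_2m_3=sx^5+ty^5$ exhibits a left-hand side divisible by $x$ (or identically zero), contradicting $x\nmid sx^5+ty^5$. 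Adding this case --- and spelling out the two subcases you compress into the phrase "degree mismatch" in the linear-factor step --- makes your proof complete and uniform in all characteristics $p>2$, including $p=5$.
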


\begin{proof}
It suffices to prove that $s^{-1} F = x^5 + s^{-1} t y^{5} + s^{-1} x y z^3$, which is monic with respect to $x$, is irreducible over $\overline{K}$.
If $s^{-1} F$ is reducible, we have either of the following two cases: (1) $s^{-1} F = g_1 g_4$ for some linear form $g_1$ and some quartic form $g_4$ in $\overline{K}[x,y,z]$, or (2) $s^{-1} F = g_2 g_3$ for some quadratic form $g_2$ and some cubic form $g_3$ in $\overline{K}[x,y,z]$.
\begin{enumerate}
\item We may assume that the coefficient of $x$ (resp.\ $x^4$) in $g_1$ (resp.\ $g_4$) is $1$.
Writing 
\begin{eqnarray}
g_1 & = & x + a_1 y + a_2 z, \quad \mbox{and}   \nonumber \\
g_4 & = & x^4 + a_3 x^3 y + a_4 x^3 z + a_5 x^2 y^2 + a_6 x^2 y z + a_7 x^2 z^2 + a_8 x y^3 + a_9 x y^2 z\nonumber \\
& & + a_{10} x y z^2 + a_{11}x z^3 + a_{12} y^4 + a_{13}y^3 z + a_{14}y^2 z^2 + a_{15} y z^3 + a_{16} z^4 \nonumber 
\end{eqnarray} 
for $a_i \in \overline{K}$ with $1 \leq i \leq 16$, we have
\begin{eqnarray}
g_1 g_4 &=& x^5 + (a_1 + a_3) x^4 y + (a_2 + a_4) x^4 z + ( a_1 a_3 + a_5) x^3 y^3 + (a _2 a_3 + a_1 a_4 a_6 ) x^3 y z +  (a_{2} a_4 + a_7 ) x^3 z^2 \nonumber \\
& &  + (a_1 a_5 + a_8) x^2 y^3 + (a_2 a_5 + a_1 a_6 + a_9 ) x^2 y^2 z + (a_2 a_6 + a_1 a_7 + a_10) x^2 y z^2 + (a_2 a_7 + a_{11}) x^2 z^3 \nonumber \\
& & + (a_1 a_8 + a_{12} ) x y^4 + (a_2 a_8 + a_1 a_9 + a_{13}) x y^3 z + (a_2 a_9 + a_1 a_{10} + a_{14}) x y^2 z^2 \nonumber \\
& &  + (a_2 a_{10} + a_1 a_{11} + a_{15}) x y z^3 + (a_{2} a_{11} + a_{16}) x z^4 + a_{1} a_{12} y^5 + (a_2 a_{12} + a_1 a_{13}) y^4 z \nonumber \\
& & + (a_2 a_{13} + a_1 a_{14}) y^3 z^2 + (a_2 a_{14} + a_1 a_{15}) y^2 z^3 + (a_2 a_{15} + a_1 a_{16}) y z^4 + a_2 a_{16}z^5. \nonumber
\end{eqnarray}
Note that $a_1 \neq 0$ and $a_{12} \neq 0$ since the coefficient $a_1 a_{12}$ of $y^5$ is $s^{-1}t$.
Since the coefficient of $z^5$ in $F$ is zero, we have $a_2 a_{16} = 0$ and thus $a_2 = 0$ or $a_{16} = 0$.
Assume $a_2 = 0$.
In this case, we have $a_1 a_i = 0$ and hence $a_i = 0$ for $13 \leq i \leq 16$ since the coefficients of $y z^4$, $y^2 z^3$, $y^3 z^2$ and $y^4 z$ are zero.
The coefficients of $y^5$ and $x y z^3 $ are $a_1 a_{12} = s^{-1} t$ and $a_2 a_{10} + a_1 a_{11} + a_{15} =a_1 a_{11} = s^{-1}$ respectively, and thus $t a_{11}=a_{12}$ by $a_1 \neq 0$.
The coefficient of $x^2 z^3$ is $a_2 a_7 + a_{11}= a_{11}$ is zero, which contradicts $t a_{11} = a_{12} \neq 0$.
Assume $a_2 \neq 0$ and $a_{16} = 0$.
Since the coefficients of $y z^4$, $y^2 z^3$, $y^3 z^2$ and $y^4 z$ are zero, one has $a_i = 0$ for $12 \leq i \leq 15$.
The condition $a_{12} = 0$ is a contradiction.
\item Writing
\begin{eqnarray}
g_2 & = & x^2 + b_1 x y + b_2 x z + b_3 y^2 + b_4 y z + b_5 z^2, \quad \mbox{and} \nonumber \\
g_3 & = & x^3 + b_6 x^2 y + b_7 x^2 z + b_8 x y^2 + b_9 x y z + b_{10}x z^2 + b_{11}y^3 + b_{12}y^2 z + b_{13}y z^2 + b_{14} z^3 \nonumber
\end{eqnarray}
for $b_i \in \overline{K}$ with $1 \leq i \leq 14$, we have
\begin{eqnarray}
g_2 g_3 & = & x^5 + (b_1 + b_6) x^4 y + (b_2 + b_7) x^4 z + (b_1 b_6 + b_3 + b_8) x^3 y^2 \nonumber \\
& & + (b_2 b_6 + b_1 b_7 + b_4 + b_9) x^3 y z + (b_2 b_7 + b_5 + b_{10}) x^3 z^2 \nonumber \\
& & + (b_3 b_6 + b_1 b_8 + b_{11}) x^2 y^3 + (b_4 b_6 + b_3 b_7 + b_2 b_8 + b_1 b_9 + b_{12})x^2 y^2 z \nonumber \\
& & + (b_5 b_6 + b_4 b_7 + b_2 b_9 + b_1 b_{10} + b_{13}) x^2 y z^2 + (b_{5} b_{7} + b_{2}b_{10} + b_{14}) x^2 z^3 \nonumber \\
& & + (b_3 b_8 + b_1 b_{11}) x y^4 + (b_4 b_8 + b_3 b_9 + b_2 b_{11} + b_1 b_{12}) x y^3 z \nonumber \\
& & + (b_5 b_8 + b_4 b_9 + b_3 b_{10} + b_2 b_{12} + b_1 b_{13}) x y^2 z^2 \nonumber \\
& & + (b_5 b_9 + b_4 b_{10} + b_2 b_{13} + b_1 b_{14})x y z^3 + (b_5 b_{10} + b_{2}b_{14}) x z^4 \nonumber \\
& &  + b_3 b_{11} y^5 + (b_4 b_{11} + b_3 b_{12}) y^4 z + (b_5 b_{11} + b_4 b_{12} + b_3 b_{13}) y^3 z^2 \nonumber \\
& &+ (b_5 b_{12} + b_4 b_{13}+ b_3 b_{14}) y^2 z^3 + (b_5 b_{13} + b_4 b_{14}) y z^4 + b_5 b_{14} z^5 . \nonumber 
\end{eqnarray}
Note that $b_3 \neq 0$ and $b_{11} \neq 0$ since the coefficient $b_3 b_{11}$ of $y^5$ is $s^{-1}t$.
Since the coefficient of $z^5$ in $F$ is zero, we have $b_5 b_{14} = 0$ and thus $b_5 = 0$ or $b_{14} = 0$.
Assume $b_5 \neq 0$ and $b_{14} = 0$.
Since the coefficients of $yz^4$, $y^2 z^3$ and $y^3 z^2$ are zero, we have $b_{13} = b_{12} = b_{11} = 0$, which contradicts $b_{11} \neq 0$.
Assume $b_5 = 0$ and $b_{14} \neq 0$.
Since the coefficients of $yz^4$, $y^2 z^3$ and $y^3 z^2$ are zero, we have $b_{4} = b_{3} = 0$, which contradicts $b_{3} \neq 0$.
Assume $b_5 = b_{14} = 0$.
Since the coefficient of $y^2 z^3$ is zero, we have $b_4 b_{13}=0$.
If $b_4 = 0$, then we have $b_{13}=0$ since the coefficient of $y^3 z^2$ is $b_5 b_{11} + b_4 b_{12} + b_3 b_{13} = b_3 b_{13}$ and since $b_{3} \neq 0$.
Now we have $b_4 = b_5 = b_{13}= b_{14}=0$, which contradicts that the coefficient of $x y z^3$ is not zero.
If $b_{13} = 0$ and $b_4 \neq 0$, then we have $b_{12} = 0$ since the coefficient of $y^3 z^2$ is $b_5 b_{11} + b_4 b_{12} + b_3 b_{13} = b_4 b_{12}$ and since $b_{4} \neq 0$.
Since the coefficient $b_4 b_{11} + b_3 b_{12} = b_4 b_{11}$ of $y^4 z$ is zero, we have $b_{11} = 0$, which is a contradiction for $b_{11} \neq 0$.
\end{enumerate}
\end{proof}

%=======================================================
\subsection{Superspeciality of $T_p$}
%=======================================================
In the following, we suppose $p > 5$.
It is shown in \cite{KH18} that we can decide whether the desingularization $T_p$ of $V(F)$ is superspecial or not by computing the coefficients of certain monomials in $(F)^{p-1}$, where $F = x y z^3 + s x^5 + t y^5$.

\begin{prop}[\cite{KH18}, Corollary 3.1.6]\label{cor:HW-2}
With notation as above, the desingularization $T_p$ of $V(F)$ is superspecial if and only if the coefficients of all the following $25$ monomials of degree $5 (p-1)$ in $(F)^{p-1}$ are zero:
\begin{equation}
\begin{array}{ccccc}
( x^3 y z)^{p-1}, & x^{3 p-1} y^{p-3} z^{p-1}, & x^{3 p-2} y^{p-2} z^{p - 1}, &  x^{3 p - 2} y^{p-1} z^{p - 2}, & x^{3 p - 1} y^{p-2} z^{p-2}, \\
x^{p-3} y^{3 p-1} z^{p-1}, & ( x y^3 z )^{p-1} , & x^{p-2} y^{3 p-2} z^{p - 1}, &  x^{p -2} y^{3 p-1} z^{p - 2}, & x^{p-1} y^{3 p -2} z^{p-2}, \\
x^{2 p-3} y^{2 p-1} z^{p - 1}, & x^{2 p-1} y^{2 p-3} z^{p-1}, & ( x^2 y^2 z)^{p-1} , &  x^{2 p -2} y^{2 p-1} z^{p - 2}, & x^{2 p - 1} y^{2 p - 2} z^{p-2}, \\
 x^{2 p -3} y^{p-1} z^{2 p - 1}, & x^{2 p-1} y^{p-3} z^{2 p-1}, & x^{2 p-2} y^{p-2} z^{2 p - 1}, & ( x^2 y z^2 )^{p-1} , & x^{2 p- 1} y^{p-2} z^{2 p -2}, \\
 x^{p -3} y^{2 p-1} z^{2 p - 1}, & x^{p-1} y^{2 p-3} z^{2 p-1}, & x^{p-2} y^{2 p-2} z^{2 p - 1}, & x^{p-2} y^{2 p-1} z^{2 p - 2} ,& (x y^2 z^2)^{p-1}.
\end{array} \nonumber
\end{equation}
\end{prop}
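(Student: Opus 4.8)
The plan is to reduce superspeciality to the vanishing of the Hasse--Witt (Cartier--Manin) matrix and then to make that matrix completely explicit in terms of $F^{p-1}$. First I would invoke the standard criterion (Ekedahl, Nygaard): a curve $C$ of genus $g$ over a perfect field of characteristic $p$ is superspecial if and only if the $p$-linear Frobenius on $H^1(C,\mathcal{O}_C)$ is identically zero, equivalently the Cartier operator $\mathcal{C}$ on $H^0(C,\Omega^1_C)$ vanishes. Thus it suffices to fix a basis of $H^0(T_p,\Omega^1_{T_p})$, compute $\mathcal{C}$ in that basis, and determine when the resulting $5\times 5$ matrix is zero.

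Second, I would pin down the basis. By Lemma \ref{lem:sing-2} (valid for $p>5$), $V(F)$ has a single singular point $[0:0:1]$; in the chart $z=1$ the local equation $xy+sx^5+ty^5$ has tangent cone $xy$, so the point is an ordinary node with $\delta$-invariant $1$, which indeed lowers the geometric genus from $\binom{5-1}{2}=6$ (a smooth plane quintic) to $5$. For a plane curve of degree $d$ whose only singularities are nodes, the regular differentials on the normalization are the adjoint differentials $\omega_h=\frac{h\,dx}{\partial f/\partial y}$ with $h$ a form of degree $d-3$ passing through each node. Here $d-3=2$ and the node is $[0:0:1]$, so the adjoint condition is simply that the coefficient of $z^2$ in $h$ vanishes; hence $\{x^2,\,xy,\,xz,\,y^2,\,yz\}$, dehomogenized to $\{x^2,xy,x,y^2,y\}$ on $z=1$, is the desired basis.

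Third comes the heart of the matter: the explicit Cartier-operator formula for plane curves. Writing $f=F(x,y,1)$ and expanding $f^{p-1}h=\sum_{i,j}c_{ij}\,x^iy^j$, the operator satisfies
\[
\mathcal{C}\!\left(\frac{h}{f_y}\,dx\right)=\frac{1}{f_y}\left(\sum_{k,l}c_{\,pk+p-1,\;pl+p-1}^{\,1/p}\,x^ky^l\right)dx ,
\]
so its matrix entries in the adjoint basis are, up to a $p$-th-root normalization, the coefficients of $f^{p-1}$ at the exponents $(pk+p-1-i_0,\;pl+p-1-j_0)$, where $x^{i_0}y^{j_0}$ is the input basis monomial and $x^ky^l$ the output one. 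Running over the five inputs and five outputs above and re-homogenizing (the $z$-exponent being forced by the total degree $5(p-1)$) produces exactly the tabulated $25$ monomials: for instance input $x^2$, output $x^2$ yields the coefficient of $x^{3p-3}y^{p-1}z^{p-1}=(x^3yz)^{p-1}$, input $y^2$, output $x^2$ yields $x^{3p-1}y^{p-3}z^{p-1}$, input $xy$, output $xy$ yields $(x^2y^2z)^{p-1}$, and so on down the five rows. Superspeciality, i.e.\ $\mathcal{C}=0$, is therefore equivalent to the simultaneous vanishing of these $25$ coefficients of $F^{p-1}$.

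The step I expect to be the main obstacle is the rigorous justification of the third paragraph in the presence of the node. The adjoint/residue description of $H^0(\Omega^1)$ and the coefficient formula for $\mathcal{C}$ are classical for \emph{smooth} plane curves; to apply them here I must verify that the adjoint differentials really furnish a basis of regular differentials on the normalization $T_p$ and that the naive coefficient extraction still computes $\mathcal{C}$ \emph{after} normalization, i.e.\ that passing to $T_p$ does not disturb the residue/duality pairing underlying the formula. I would settle this by a local analysis at $[0:0:1]$: the node has two smooth branches, over which the adjoint condition exactly cancels the pole introduced by $1/f_y$, so each $\omega_h$ extends regularly across both points of $T_p$ lying above the node, and the Cartier operator commutes with the restriction to each branch. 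Granting this, the remaining content is the exponent bookkeeping indicated above, which is routine.
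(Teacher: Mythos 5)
The paper itself contains no proof of this proposition: it is imported verbatim from \cite{KH18}, Corollary 3.1.6, where it is obtained on the Hasse--Witt side, i.e.\ by computing the Frobenius action on $H^1(\mathcal{O})$ cohomologically from the plane model (the twisting sequence $0 \to \mathcal{O}_{\mathbf{P}^2}(-5) \to \mathcal{O}_{\mathbf{P}^2} \to \mathcal{O}_{V(F)} \to 0$ together with the normalization sequence relating $V(F)$ to $T_p$). Your proof is correct but takes the Serre-dual route: the Cartier operator on $H^0(\Omega^1)$, the adjoint conics through the node as an explicit basis, and the St\"ohr--Voloch coefficient formula. Your exponent bookkeeping does reproduce the table exactly: the five rows are the outputs $x^2, y^2, xy, xz, yz$ and the five columns the inputs in the same order, and, e.g., input $x^{i_0}y^{j_0}$, output $x^ky^l$ gives the coefficient of $x^{pk+p-1-i_0}y^{pl+p-1-j_0}z^{5(p-1)-(pk+p-1-i_0)-(pl+p-1-j_0)}$ in $F^{p-1}$, which runs through the $25$ listed monomials. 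What your route buys is self-containedness at the level of classical plane-curve theory: no sheaf cohomology and no tracking of how Frobenius descends through the normalization. What the route of \cite{KH18} buys is a framework that applies uniformly to their whole classification program and produces the full Hasse--Witt matrix (hence $a$-numbers, ranks), not only the vanishing criterion.

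Two refinements to your write-up. First, the obstacle you flag is smaller than you fear: the St\"ohr--Voloch formula is a statement about rational differentials of the function field of an irreducible plane curve, smooth or not, so nothing in it can be ``disturbed'' by passing to $T_p$; the only genuinely geometric input is classical adjunction for nodal curves (valid here since $p>5$), namely that the five conics through the node give a basis of $H^0(T_p,\Omega^1_{T_p})$. Second, one point should be made explicit in the ``if'' direction of the equivalence: the formula writes $\mathcal{C}(\omega_j)=\tilde h_j\,dx/f_y$ where a priori $\tilde h_j$ may involve all six conic monomials, including the constant term (the non-adjoint conic $z^2$), while vanishing of your $25$ coefficients only kills the five adjoint ones. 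This loophole closes in either of two ways: since $\mathcal{C}$ preserves $H^0(T_p,\Omega^1_{T_p})$ and $h \mapsto h\,dx/f_y$ is injective on polynomials of degree $\leq 2 < 5 = \deg f$, the polynomial $\tilde h_j$ is automatically adjoint; or directly, the constant term of $\tilde h_j$ is a coefficient of $F^{p-1}$ at a monomial whose $z$-exponent is $3(p-1)+i_0+j_0 > 3(p-1)$, which is impossible because every monomial of $F^{p-1}$ has $z$-exponent $3a \leq 3(p-1)$. With either remark inserted, your argument is complete.
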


To prove Theorem \ref{thm:main-2} stated in Section \ref{sec:intro} (and in Section \ref{sec:main-2}), we compute the $25$ coefficients given in Proposition \ref{cor:HW-2}.
We have
\begin{eqnarray}
(F)^{p-1} & = & 
\sum_{a + b + c = p-1} \binom{p-1}{a,b,c} ( x y z^3 )^{a} (s x^5 )^b (t y^5)^c \nonumber \\
& = & \sum_{a + b + c = p-1} \binom{p-1}{a,b,c} ( x^{a} y^{a} z^{3 a} ) (s^{b} x^{5 b}) (t^{c} y^{5 c}) \nonumber \\
& = & \sum_{a + b + c = p-1} s^b t^c \cdot \binom{p-1}{a,b,c} x^{a + 5 b} y^{a + 5 c} z^{3 a} \label{eq:F}
\end{eqnarray}
by the multinomial theorem.
To express $(F)^{p-1}$ as a sum of the form
\[
(F)^{p-1} = \sum_{(i,j,k) \in \left( \mathbb{Z}_{\geq 0} \right)^{\oplus 3}} c_{i,j,k} x^i y^j z^k,
\]
we consider the linear system
\begin{eqnarray}
  \left\{
    \begin{array}{l}
     a + b + c = p-1, \\
     a + 5 b  = i, \\
     a + 5 c = j,  \\
     3 a = k , 
    \end{array}
  \right. \label{eq:system-2}
\end{eqnarray}
and put
\begin{eqnarray}
S (i,j,k) := \{ (a, b, c) \in [0,p-1]^{\oplus 3} : (a,b,c) \mbox{ satisfies } \eqref{eq:system-2} \} \label{eq:sol_set-2}
\end{eqnarray}
for each $(i,j,k) \in \left( \mathbb{Z}_{\geq 0} \right)^{\oplus 3}$.
Using the notation $S (i,j,k)$, we have
\begin{eqnarray}
(F)^{p-1} = \sum_{(i,j,k) \in \left( \mathbb{Z}_{\geq 0} \right)^{\oplus 3}} \left( \sum_{(a, b, c) \in S(i,j,k)} s^b t^c \cdot \binom{p-1}{a,b,c} \right) x^{i} y^{j} z^{k}. \label{eq:sum-2}
\end{eqnarray}

%===============================
\subsubsection{Case of $p \equiv 2 \pmod{3}$}
%===============================
We first consider the case of $p \equiv 2 \pmod{3}$.

\begin{lem}\label{lem:coeff_zero-2}
With notation as above, if $p \equiv 2 \pmod{3}$, the coefficients of the monomials $x^i y^j z^{p-1}$ and $x^i y^j z^{2 p-2}$ in $(F)^{p-1}$ are zero for all $(i,j) \in \left( \mathbb{Z}_{\geq 0} \right)^{\oplus 2}$.
\end{lem}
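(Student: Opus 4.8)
The plan is to exploit the structure of the expansion \eqref{eq:F} of $(F)^{p-1}$: the variable $z$ enters only through the factor $z^{3a}$ coming from the term $(xyz^3)^a$. Consequently, every monomial $x^i y^j z^k$ that actually occurs in $(F)^{p-1}$ must satisfy $k = 3a$ for some $a$, so its $z$-exponent is necessarily a multiple of $3$. In the notation of \eqref{eq:sol_set-2}, this says precisely that the solution set $S(i,j,k)$ is empty whenever $3 \nmid k$, since the last equation $3a = k$ of the linear system \eqref{eq:system-2} then admits no integer solution $a$.

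With this observation in hand, the remaining input is a one-line congruence. First I would record that if $p \equiv 2 \pmod{3}$, then $p - 1 \equiv 1 \pmod{3}$ and $2p - 2 = 2(p-1) \equiv 2 \pmod{3}$; in particular neither $p-1$ nor $2p-2$ is divisible by $3$. Combining this with the divisibility remark above yields $S(i,j,p-1) = S(i,j,2p-2) = \emptyset$ for every pair $(i,j) \in \left( \mathbb{Z}_{\geq 0} \right)^{\oplus 2}$.

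Finally, I would read off the coefficient of $x^i y^j z^k$ directly from \eqref{eq:sum-2}, namely the inner sum $\sum_{(a,b,c) \in S(i,j,k)} s^b t^c \binom{p-1}{a,b,c}$, and conclude that for $k = p-1$ and $k = 2p-2$ this is an empty sum, hence zero. This proves the lemma. I expect no real obstacle here: the entire statement collapses to the single structural fact that the $z$-degree in \eqref{eq:F} is always a multiple of $3$, so the only thing to verify is the residue of $p-1$ and $2(p-1)$ modulo $3$, which is immediate from $p \equiv 2 \pmod{3}$.
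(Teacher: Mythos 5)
Your proposal is correct and uses essentially the same argument as the paper: the $z$-exponent of every monomial in $(F)^{p-1}$ is $3a$, hence divisible by $3$, while $p-1$ and $2p-2$ are congruent to $1$ and $2$ modulo $3$ when $p \equiv 2 \pmod{3}$. Your additional framing in terms of the emptiness of $S(i,j,k)$ and the empty sum in \eqref{eq:sum-2} is just a more formal restatement of the paper's one-line conclusion.
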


\begin{proof}
Recall from \eqref{eq:F} that the $z$-exponent of each monomial in $(F)^{p-1}$ is $3 a$, which is divided by $3$.
On the other hand, the $z$-exponents of the monomials $x^i y^j z^{p-1}$ and $x^i y^j z^{2 p-2}$ are $p-1$ and $2p-2$, which are congruent to $1$ and $2$ modulo $3$ respectively.
Thus their coefficients in $(F)^{p-1}$ are all zero.
\end{proof}

% \begin{prop}[\cite{KH16}, Corollary 3.1.6]\label{cor:HW}
% With notation as above, $C$ is superspecial if and only if the coefficients of $x^{pi-i'}y^{pj-j'}z^{pk-k'}w^{pl-l'}$
% in $(P Q)^{p-1}$ are equal to $0$
% for all positive integers $i,j,k,l,i',j',k',l'$ with $i+j+k+l=i'+j'+k'+l'=5$.
% \end{prop}

Let $\mathcal{M}$ be the set of the $25$ monomials given in Proposition \ref{cor:HW-2}, and set
\[
E (\mathcal{M}) := \{ (i,j,k ) \in \left( \mathbb{Z}_{\geq 0} \right)^{\oplus 3} : x^i y^j z^k = m \mbox{ for some } m \in \mathcal{M} \} ,
\]
which is the set of the exponent vectors of the monomials in $\mathcal{M}$.

\begin{lem}\label{lem:coeff-1-2}
Assume $p \equiv 2 \pmod{3}$.
If $p \equiv 1 \pmod{5}$ or $p \equiv 4 \pmod{5}$, then we have $S (i,j,k) = \emptyset$ for any $(i,j,k) \in E(\mathcal{M})$.
\end{lem}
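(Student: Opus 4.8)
The plan is to analyze, for each $(i,j,k)\in E(\mathcal{M})$, the unique rational solution of the determined linear system \eqref{eq:system-2} and to show that it fails to be integral, so that no admissible triple $(a,b,c)\in[0,p-1]^{\oplus 3}$ can exist. Solving the last three equations of \eqref{eq:system-2} gives $a=k/3$, $b=(i-a)/5$ and $c=(j-a)/5$, and substituting into the first equation shows that its left-hand side equals $(k+i+j)/5$. Since every monomial in $\mathcal{M}$ has degree $5(p-1)$, we have $i+j+k=5(p-1)$ for all $(i,j,k)\in E(\mathcal{M})$, so the first equation of \eqref{eq:system-2} is automatically consistent. Hence $S(i,j,k)=\emptyset$ precisely when this rational solution is non-integral, and it suffices to exhibit, for each exponent vector, a single failed divisibility.

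I would then split $E(\mathcal{M})$ according to the $z$-exponent $k$. By hypothesis $p\equiv 2\pmod 3$, and the $z$-exponents occurring in $\mathcal{M}$ are $k\in\{p-1,\,p-2,\,2p-1,\,2p-2\}$, which reduce modulo $3$ to $1,0,0,2$ respectively. For the thirteen monomials with $k\in\{p-1,2p-2\}$ we have $3\nmid k$, so $3a=k$ has no integer solution and $S(i,j,k)=\emptyset$; this is exactly the mechanism already recorded in Lemma \ref{lem:coeff_zero-2}. It remains to treat the twelve monomials with $k\in\{p-2,2p-1\}$, for which $a=k/3\in\mathbb{Z}$.

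For these twelve exponent vectors I would use the second and third equations of \eqref{eq:system-2} in the form $i-j=5(b-c)$, so that any solution forces $i\equiv j\pmod 5$. A direct inspection shows that, for the twelve monomials in question, the difference $i-j$ always takes one of the shapes $\pm 1$, $\pm p$, $\pm p\pm 2$, or $\pm 2p\pm 1$. Under the hypothesis $p\equiv 1\pmod 5$ or $p\equiv 4\pmod 5$ we have $p\equiv\pm 1\pmod 5$, and each of these shapes is then $\not\equiv 0\pmod 5$: indeed $\pm p\equiv\pm 1$, while $\pm p\pm 2\equiv\pm 1\pm 2\in\{1,2,3,4\}$ and $\pm 2p\pm 1\equiv\pm 2\pm 1\in\{1,2,3,4\}$ modulo $5$. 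Therefore $i\not\equiv j\pmod 5$, which contradicts $i\equiv j\pmod 5$, and $S(i,j,k)=\emptyset$ for these vectors as well.

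The argument involves no serious obstacle beyond a clean bookkeeping of the twelve differences $i-j$; the two simplifications that make it painless are the observation that $i+j+k=5(p-1)$ removes the additive constraint, and the collapse of the two congruence hypotheses into $p\equiv\pm 1\pmod 5$. The one point to be careful about is that the mod-$5$ test alone does not suffice for every monomial: the symmetric exponent vectors such as that of $(x^2 y^2 z)^{p-1}$ satisfy $i=j$, so there the divisibility by $5$ holds and the emptiness must instead come from $3\nmid k$. This is precisely why both branches of the dichotomy on $k$ are needed, and why the split by $z$-exponent is carried out first.
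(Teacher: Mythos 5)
Your proposal is correct and follows essentially the same route as the paper's proof: the paper likewise reduces the system \eqref{eq:system-2}, uses Lemma \ref{lem:coeff_zero-2} (i.e.\ $3 \nmid k$ when $p \equiv 2 \pmod{3}$) to eliminate the monomials with $k \in \{p-1, 2p-2\}$, and then shows for the remaining twelve exponent vectors that solvability would force $i - j \equiv 0 \pmod{5}$, which fails for the listed differences $\pm 1$, $\pm p$, $\pm p \pm 2$, $\pm 2p \pm 1$ when $p \equiv \pm 1 \pmod{5}$. Your explicit back-substitution ($a = k/3$, $b = (i-a)/5$, $c = (j-a)/5$, with the first equation automatic from $i+j+k = 5(p-1)$) replaces the paper's matrix row reduction and mod-$5$ reduction, but the content and the case analysis are the same.
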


\begin{proof}
Note that for each $(i,j,k) \in E(\mathcal{M})$, we have $i+j+k=5 (p-1)$, see Proposition \ref{cor:HW-2}.
Using matrices, we write the system \eqref{eq:system-2} as
\begin{eqnarray}
	\left( \begin{array}{ccc}
	1 & 1 & 1 \\
	1 & 5 & 0 \\
	1 & 0 & 5 \\
	3 & 0 & 0 
	\end{array} \right)
	\left( \begin{array}{c}
	a \\
	b \\
	c 
	\end{array} \right)
	= \left( \begin{array}{c}
	p-1 \\
	i \\
	j \\
	k
	\end{array} \right) , \label{eq:sys1-1-2}
\end{eqnarray}
whose extended coefficient matrix is transformed as follows:
\[
\left( \begin{array}{cccc}
	1 & 1 & 1 & p-1\\
	1 & 5 & 0 & i\\
	1 & 0 & 5 & j \\
	3 & 0 & 0 & k 
	\end{array} \right)
\longrightarrow
	\left( \begin{array}{cccc}
	1 & 1 & 1 & p - 1\\
	0 & 1 & -4 & - j + ( p - 1 ) \\
	0 & 0 & 15 & i + 4 j -5 ( p - 1 ) \\
	0 & 0 & 0 & 0 
	\end{array} \right)
\]
Considering modulo $5$, we have the following linear system over $\mathbb{F}_5$:
\begin{eqnarray}
\left( \begin{array}{ccc}
	1 & 1 & 1\\
	0 & 1 & 1\\
	0 & 0 & 0\\
	0 & 0 & 0 
	\end{array} \right)
	\left( \begin{array}{c}
	a^{\prime} \\
	b^{\prime} \\
	c^{\prime} 
	\end{array} \right)
	= \left( \begin{array}{c}
% 	p-1\\
% 	-j + (p-1)\\
% 	i + j + ( p - 1) \\
% 	0 
	p-1\\
	-j + (p-1)\\
	i - j \\
	0 
	\end{array} \right) . \label{eq:sys-2}
\end{eqnarray}
Note that the system \eqref{eq:sys-2} over $\mathbb{F}_5$ has a solution if and only if $i - j \equiv 0 \pmod{5}$.
Assume $p \equiv 2 \pmod{3}$.
We claim that if $p \equiv 1 \pmod{5}$ or if $p \equiv 4 \pmod{5}$, the original system \eqref{eq:sys1-1-2} over $\mathbb{Z}$ has no solution in $[0,p-1]^{\oplus 3}$ for any $(i,j,k) \in E (\mathcal{M})$.
Indeed, if $p \equiv 1 \pmod{5}$ or $p \equiv 4 \pmod{5}$, and if the system \eqref{eq:sys1-1-2} has a solution in $[0,p-1]^{\oplus 3}$ for some $(i,j,k) \in E (\mathcal{M})$, the system \eqref{eq:sys-2} has a solution.
Since $p \equiv 2 \pmod{3}$, it follows from Lemma \ref{lem:coeff_zero-2} that $k \neq p-1$ and $k \neq 2 p-2$, i.e., $k = 2 p -1$ or $k= p-2$.
Thus we may assume that $(i,j,k)$ is either of the following:
\[
( 3 p - 2, p - 1, p - 2 ), \quad ( 3 p - 1, p - 2, p - 2 ), \quad ( p - 2, 3 p - 1, p - 2 ), \quad ( p - 1, 3 p - 2, p - 2),
\]
\[
(2 p - 2, 2 p - 1, p - 2 ), \quad ( 2 p - 1, 2 p - 2, p - 2 ), \quad ( 2 p - 3, p - 1, 2 p - 1 ), \quad ( 2 p - 1, p - 3, 2 p - 1 ), 
\]
\[
(2 p - 2, p - 2, 2 p - 1), \quad ( p - 3, 2 p - 1, 2 p - 1), \quad ( p - 1, 2 p - 3, 2 p - 1), \quad ( p - 2, 2 p - 2, 2 p - 1 ) .
\]
The value $i - j$ takes $2 p - 1$, $2p + 1$, $- 2 p - 1$, $- 2 p + 1$, $-1$, $1$, $p-2$, $p+2$, $p$, $-p-2$ or $-p+2$, $- p$, each of which is not congruent to $0$ modulo $5$ if $p \equiv 1 \pmod{5}$ or if $p \equiv 4 \pmod{5}$.
This is a contradiction.
% For $(i, j, k) = (3 p - 2, p-1, p-2)$ and $(3 p - 1, p -2, p-2)$, we have
% \[
% i - j \equiv 2 p - 3  \equiv 2 p + 2 \equiv
% \left\{
% \begin{array}{ll}
%     4 \pmod{5} & \mbox{if} \quad p \equiv 1 \pmod{5} \\
%     3 \pmod{5} & \mbox{if} \quad p \equiv 3 \pmod{5} \\
%     0 \pmod{5} & \mbox{if} \quad p \equiv 4 \pmod{5}
% \end{array} \right.
% \]
% For $(p-2, 3 p -1, p-2)$ and $(p-1, 3 p -2, p- 2)$, 
% \[
% i - j \equiv -2 p - 3  \equiv 2 p + 2 \equiv
% \left\{
% \begin{array}{ll}
%     4 \pmod{5} & \mbox{if} \quad p \equiv 1 \pmod{5} \\
%     3 \pmod{5} & \mbox{if} \quad p \equiv 3 \pmod{5} \\
%     0 \pmod{5} & \mbox{if} \quad p \equiv 4 \pmod{5}
% \end{array} \right.
% \]
% We next claim that if $p \equiv 2 \pmod{3}$ and $p \equiv 4 \pmod{5}$, the original system \eqref{eq:sys1-1-2} over $\mathbb{Z}$ has no solution in $[0,p-1]^{\oplus 3}$ for any $(i,j,k) \in E (\mathcal{M})$.
% Indeed, if $p \equiv 2 \pmod{3}$ and $p \equiv 1 \pmod{5}$, and if the system \eqref{eq:sys1-1-2} has a solution in $[0,p-1]^{\oplus 3}$ for some $(i,j,k,) \in E (\mathcal{M})$, the system \eqref{eq:sys2} has a solution.
% By Lemma \ref{lem:coeff_zero2}, we may assume $k \neq p-2$ and $k \neq 2 p-1$, i.e., $k = 2 p -2$ or $k= p-1$.
% Since $i \equiv 0 \pmod{3}$ and since $p \equiv 2 \pmod{3}$, the integer $i$ is equal to $2 p -1$ or $p-2$, and thus $(i,j,k,\ell) = (2 p -1, p-2, p-1, p-1)$, $(2p-1,p-1,p-1,p-2)$, $(p -2, 2p-1, p-1, p-1)$ or $(p-2,p-1,p-1,2p-1)$.
% However, any of the above four candidates for $(i,j,k,\ell)$ does not satisfy $\ell \equiv j \pmod{3}$, which is a contradiction.
\end{proof}

\begin{prop}\label{prop:main1-2}
Assume $p \equiv 2 \pmod{3}$.
If $p \equiv 1 \pmod{5}$ or $p \equiv 4 \pmod{5}$, then the desingularization $T_p$ of $V(F)$ is superspecial.
\end{prop}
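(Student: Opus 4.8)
The plan is to deduce the proposition formally from three ingredients already in place: the Hasse--Witt superspeciality criterion of Proposition \ref{cor:HW-2}, the explicit monomial expansion \eqref{eq:sum-2} of $(F)^{p-1}$, and the emptiness result of Lemma \ref{lem:coeff-1-2}. Since the hard combinatorial work has been absorbed into Lemma \ref{lem:coeff-1-2}, the proposition itself reduces to assembling these pieces, and I would not expect any genuine obstacle to remain.

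First I would invoke Proposition \ref{cor:HW-2}, which asserts that $T_p$ is superspecial \emph{if and only if} the coefficients of all $25$ monomials in the set $\mathcal{M}$ vanish in the expansion of $(F)^{p-1}$. Thus it suffices to show that each of these $25$ coefficients is zero. Next, I would read off from \eqref{eq:sum-2} that the coefficient of the monomial $x^i y^j z^k$ in $(F)^{p-1}$ is exactly
\[
\sum_{(a,b,c) \in S(i,j,k)} s^b t^c \binom{p-1}{a,b,c},
\]
a finite sum indexed by the solution set $S(i,j,k)$ of the linear system \eqref{eq:system-2}. In particular, if $S(i,j,k)$ is empty, the sum is empty and the coefficient vanishes, regardless of the values of $s$ and $t$.

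Finally I would apply Lemma \ref{lem:coeff-1-2}: under the standing hypotheses $p \equiv 2 \pmod 3$ together with $p \equiv 1 \pmod 5$ or $p \equiv 4 \pmod 5$, one has $S(i,j,k) = \emptyset$ for every exponent vector $(i,j,k) \in E(\mathcal{M})$. Combining this with the previous paragraph, each of the $25$ coefficients attached to the monomials in $\mathcal{M}$ is therefore zero. By the criterion of Proposition \ref{cor:HW-2}, the desingularization $T_p$ is superspecial, completing the argument. The only non-routine step, namely checking that the congruence obstruction $i - j \not\equiv 0 \pmod 5$ holds across the list of candidate exponent vectors, has already been handled inside Lemma \ref{lem:coeff-1-2}, so no further difficulty arises here.
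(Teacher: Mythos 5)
Your proposal is correct and matches the paper's own proof exactly: both deduce from Lemma \ref{lem:coeff-1-2} that $S(i,j,k)=\emptyset$ for all $(i,j,k)\in E(\mathcal{M})$, hence each of the $25$ coefficients in the expansion \eqref{eq:sum-2} vanishes, and then invoke Proposition \ref{cor:HW-2} to conclude superspeciality. The only difference is that you spell out the (trivial) intermediate step that an empty indexing set gives a zero coefficient, which the paper leaves implicit.
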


\begin{proof}
It follows from Lemma \ref{lem:coeff-1-2} that the coefficient of $x^{i} y^{j} z^{k}$ in \eqref{eq:sum-2} is zero for each $(i,j,k) \in E(\mathcal{M})$. 
By Proposition \ref{cor:HW-2}, the desingularization of $V(F)$ is superspecial.%\qed
\end{proof}

It follows from the proof of Lemma \ref{lem:coeff-1-2} that \eqref{eq:system-2} is equivalent to the following system:
\begin{eqnarray}
  \left\{
    \begin{array}{l}
     a + b + c = p-1, \\
     b - 4 c  = - j + ( p - 1 ),  \\
     15 c = i + 4 j -5 ( p - 1 ).
    \end{array}
  \right. \label{eq:system3-2}
\end{eqnarray}

Different from the case where $p \equiv 1 \pmod{5}$ or $p \equiv 4 \pmod{5}$, the desingularization of $V(F)$ is not superspecial if $p \equiv 3 \pmod{5}$.

\begin{lem}\label{lem:coeff-2-2}
Assume $p \equiv 2 \pmod{3}$.
If $p \equiv 3 \pmod{5}$, then we have $\# S (3 p- 2, p - 1, p-2) = 1$.
In other words, the system \eqref{eq:system3-2} with $(i,j,k) = (3 p- 2, p - 1, p-2) $ has a unique solution in $[0,p-1]^{\oplus 3}$.
The solution is given by
\begin{eqnarray}
\left( \begin{array}{ccc}
a, & b, & c
\end{array} \right)
 = 
\left( \begin{array}{ccc}
 (p-2)/3, & 4 (2 p -1)/ 15, & (2 p -1)/ 15
 \end{array} \right) . \label{eq:sol-2-2}
\end{eqnarray}
Note that $(p-2)/3$, $4 (2 p - 1) / 15$ and $(2 p - 1)/15$ are less than $p-1$.
\end{lem}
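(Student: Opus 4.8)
The plan is to exploit that the system \eqref{eq:system3-2} is triangular, so that it admits a unique solution over $\mathbb{Q}$, and then to check that under the two stated congruences this rational solution happens to be an integer point lying in the box $[0,p-1]^{\oplus 3}$. Since $S(i,j,k)$ consists of the integer triples in $[0,p-1]^{\oplus 3}$ satisfying the system, uniqueness over $\mathbb{Q}$ already forces $\# S(3p-2,p-1,p-2) \le 1$, with equality precisely when the unique rational triple is integral and in range. Thus the whole lemma reduces to solving the system explicitly and verifying integrality and the range conditions.

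First I would substitute $(i,j,k) = (3p-2, p-1, p-2)$ into \eqref{eq:system3-2} and solve by back-substitution. The third equation gives $15 c = (3p-2) + 4(p-1) - 5(p-1) = 2p-1$, so $c = (2p-1)/15$; the second equation reads $b - 4c = -(p-1) + (p-1) = 0$, so $b = 4c = 4(2p-1)/15$; and the first then yields $a = (p-1) - 5c = (p-2)/3$. Because the reduced system is upper-triangular with nonzero diagonal entries, this is the \emph{only} solution over $\mathbb{Q}$, which is exactly the triple \eqref{eq:sol-2-2}. It remains to confirm that this triple is an integer point of the box, and here both hypotheses enter: from $p \equiv 2 \pmod{3}$ we get $p - 2 \equiv 0 \pmod{3}$, so $a \in \mathbb{Z}$; and combining $p \equiv 2 \pmod{3}$ (giving $2p - 1 \equiv 0 \pmod{3}$) with $p \equiv 3 \pmod{5}$ (giving $2p - 1 \equiv 0 \pmod{5}$) shows $2p - 1 \equiv 0 \pmod{15}$, so $c$, and hence $b = 4c$, lie in $\mathbb{Z}$ as well. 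The range conditions reduce to the linear inequalities $0 \le (p-2)/3 < p-1$, $0 \le (2p-1)/15 < p-1$, and $0 \le 4(2p-1)/15 < p-1$, each of which holds for $p > 5$; this justifies the final remark of the statement and gives $\# S = 1$.

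I do not expect a genuine obstacle, since the argument is a finite computation. The one subtlety worth emphasizing is the logical order: uniqueness must first be deduced over $\mathbb{Q}$ from triangularity, and only afterwards combined with the integrality-and-range check. It is precisely the joint use of $p \equiv 2 \pmod{3}$ and $p \equiv 3 \pmod{5}$ that forces $15 \mid 2p-1$, which is what makes the count equal to $1$ rather than $0$. This mirrors the contrast with Lemma \ref{lem:coeff-1-2}, where under $p \equiv 1 \pmod{5}$ or $p \equiv 4 \pmod{5}$ the analogous divisibility fails and the solution set is empty; the present lemma is exactly the complementary case that eventually witnesses the non-superspeciality for $p \equiv 3 \pmod{5}$.
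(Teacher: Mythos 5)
Your proposal is correct and follows essentially the same route as the paper: back-substitution on the triangular system to get $c=(2p-1)/15$, $b=4c$, $a=(p-2)/3$, with the two congruences combining to give $15 \mid 2p-1$ (and $3 \mid p-2$) for integrality, plus the straightforward range check. The only difference is presentational — you make explicit the "unique over $\mathbb{Q}$, hence $\#S\le 1$" framing that the paper leaves implicit.
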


\begin{proof}
The system to be solved with $(i,j,k) = (3 p- 2, p - 1, p-2)$ is given by
\begin{numcases}
  {}
  a + b + c = p-1, & \label{eq:1-2-2}\\
  b - 4 c = 0, & \label{eq:2-2-2}\\
  15 c =  2 p - 1 & \label{eq:3-2-2} 
\end{numcases}
with $(a, b, c) \in [0,p-1]^{\oplus 3}$.
Note that $2 p - 1 \equiv 2 \cdot 2 - 1  \equiv 0 \pmod{3}$ and $2 p - 1 \equiv 2 \cdot 3 - 1  \equiv 0 \pmod{5}$, and thus $2 p - 1$ is divided by $15$.
We have $c = (2 p -1 )/ 15$ by \eqref{eq:3-2-2} and $b = 4 c = 4 (2 p -1)/ 15$ by \eqref{eq:2-2-2}.
Since $b + c = (2 p - 1)/3$, it follows from \eqref{eq:1-2-2} that $a = (p-2)/3$.
\end{proof}

\begin{lem}\label{lem:coeff2-2-2}
Assume $p \equiv 2 \pmod{3}$.
If $p \equiv 3 \pmod{3}$, then the coefficient of the monomial $x^{3 p-2} y^{p-1} z^{p-2}$ in $(F)^{p-1}$ is not zero.
\end{lem}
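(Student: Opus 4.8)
The plan is to read the coefficient of $x^{3p-2} y^{p-1} z^{p-2}$ directly off the expansion \eqref{eq:sum-2} and to show it collapses to a single nonzero term. By \eqref{eq:sum-2}, this coefficient equals
$$\sum_{(a,b,c) \in S(3 p-2,\, p-1,\, p-2)} s^b t^c \binom{p-1}{a,b,c}.$$
First I would invoke Lemma \ref{lem:coeff-2-2}, which under the hypotheses $p \equiv 2 \pmod{3}$ and $p \equiv 3 \pmod{5}$ asserts that the index set $S(3p-2, p-1, p-2)$ is a singleton, consisting of the triple $(a,b,c) = \bigl((p-2)/3,\ 4(2p-1)/15,\ (2p-1)/15\bigr)$. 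Hence the sum reduces to the single term $s^b t^c \binom{p-1}{a,b,c}$, and no cancellation between competing contributions can occur.

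It then remains to verify that this lone term is nonzero in $K$. The factor $s^b t^c$ is a product of powers of the units $s, t \in K^{\times}$, hence nonzero, so everything reduces to showing that the multinomial coefficient $\binom{p-1}{a,b,c}$ is not divisible by $p$. Here I would use that $a + b + c = p-1$ forces each of $a$, $b$, $c$ to lie in $[0, p-1]$, so $p$ divides none of the factorials occurring in $\binom{p-1}{a,b,c} = (p-1)!\, / \, (a!\, b!\, c!)$. By Legendre's formula, $v_p(m!) = 0$ whenever $0 \le m \le p-1$, whence
$$v_p\!\left(\binom{p-1}{a,b,c}\right) = v_p((p-1)!) - v_p(a!) - v_p(b!) - v_p(c!) = 0,$$
so the multinomial coefficient is a $p$-adic unit and in particular nonzero in $K$. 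Combining the two observations, the coefficient $s^b t^c \binom{p-1}{a,b,c}$ is nonzero, as claimed.

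I do not anticipate a genuine obstacle here: the substantive work is already packaged in the preceding Lemma \ref{lem:coeff-2-2}, which pins down the unique contributing triple. The only point requiring a separate argument is the non-vanishing of the multinomial coefficient modulo $p$; one could equally phrase this via Lucas' theorem, since $p-1$ is a single digit in base $p$ and the equality $a + b + c = p-1$ involves no carries, forcing $\binom{p-1}{a,b,c} \not\equiv 0 \pmod{p}$. The crucial structural input is simply that exactly one solution survives, which is precisely what separates the case $p \equiv 3 \pmod{5}$ (non-superspecial) from the cases $p \equiv 1, 4 \pmod{5}$ treated in Lemma \ref{lem:coeff-1-2}, where the analogous index sets were empty.
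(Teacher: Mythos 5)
Your proposal is correct and follows essentially the same route as the paper: reduce the coefficient via \eqref{eq:sum-2} to a sum over $S(3p-2,p-1,p-2)$, invoke Lemma \ref{lem:coeff-2-2} to collapse it to the single term indexed by $\bigl((p-2)/3,\ 4(2p-1)/15,\ (2p-1)/15\bigr)$, and observe that the resulting multinomial coefficient is prime to $p$. You are in fact slightly more careful than the paper, which silently drops the unit factor $s^b t^c$ and leaves the non-divisibility of $\binom{p-1}{a,b,c}$ by $p$ unjustified, whereas you supply the Legendre/Lucas argument explicitly.
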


\begin{proof}
Let $c_{3 p- 2,p-1,p-2}$ be the coefficient of $x^{3 p-2} y^{p-1} z^{p-2}$ in $(F)^{p-1}$.
Recall from \eqref{eq:sum-2} that $c_{3 p- 2,p-1,p-2}$ is given by
\[
\sum_{(a,b,c) \in S(3 p -2, p-1, p-2)} s^b t^c \cdot \binom{p-1}{a,b,c} ,
\]
where $S (3 p- 2,p-1,p-2) $ is defined in \eqref{eq:sol_set-2}.
By Lemma \ref{lem:coeff-2-2}, the set $S(3 p- 2,p-1,p-2)$ consists of only the element given by \eqref{eq:sol-2-2}, and hence
\[
c_{3 p- 2,p-1,p-2} = \cfrac{(p-1)!}{\left( \cfrac{p-2}{3} \right) ! \left( \cfrac{4(2 p-1)}{15} \right) ! \left( \cfrac{2 p-1}{15} \right) !},
\]
which is not divisible by $p$.
\end{proof}

% Assume that $p = 1 \pmod{3}$ and that the system has a solution for $(i,j,k,\ell)$.
% In this case, the exponent $i$ is $2 p -2$ or $p-1$, and thus $(i,j,k,\ell) = (2 p -2, p-1, p-1, p-1)$, $(p-1,2p-2,p-1,p-1)$, $(p -1, 2p-1, p-1, p-2)$, $(p-1,p-1,2p-2,p-1)$, $(p-1,p-1,p-1,2p-2)$ or $(p-1,p-2,p-1,2p-1)$.
% Since $\ell=j \pmod{3}$, we have $(i,j,k,\ell) = (2 p -2, p-1, p-1, p-1)$, $(p-1,2p-2,p-1,p-1)$, $(p-1,p-1,2p-2,p-1)$, or $(p-1,p-1,p-1,2p-2)$.

\begin{prop}\label{prop:main-2-2}
Assume $p \equiv 2 \pmod{3}$.
If $p \equiv 3 \pmod{5}$, then the desingularization $T_p$ of $V(F)$ is not superspecial.
\end{prop}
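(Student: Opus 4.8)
The plan is to apply the Hasse--Witt criterion of Proposition \ref{cor:HW-2} in its contrapositive form: in order to prove that $T_p$ is \emph{not} superspecial, it suffices to exhibit a single monomial among the $25$ listed in $\mathcal{M}$ whose coefficient in $(F)^{p-1}$ is nonzero. All of the arithmetic needed for this has already been assembled in the two preceding lemmas, so the proof amounts to quoting them in the right order.

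First I would single out the monomial $x^{3p-2} y^{p-1} z^{p-2}$, which is precisely the fourth entry of the first row of the list in Proposition \ref{cor:HW-2}; hence $(3p-2, p-1, p-2) \in E(\mathcal{M})$. By Lemma \ref{lem:coeff2-2-2}, under the standing hypotheses $p \equiv 2 \pmod{3}$ and $p \equiv 3 \pmod{5}$ the coefficient $c_{3p-2, p-1, p-2}$ of this monomial in $(F)^{p-1}$ is nonzero. Combining these two facts with Proposition \ref{cor:HW-2} immediately yields that $T_p$ is not superspecial, which is the assertion of the proposition.

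The one point that deserves care, and which I would make explicit, is \emph{why} the coefficient fails to vanish rather than merely being a sum that could conceivably cancel. By \eqref{eq:sum-2}, the coefficient $c_{3p-2, p-1, p-2}$ is a sum of terms $s^b t^c \binom{p-1}{a,b,c}$ indexed by $S(3p-2, p-1, p-2)$. Lemma \ref{lem:coeff-2-2} shows that this index set is a \emph{singleton}, given explicitly by \eqref{eq:sol-2-2}, so no cancellation among distinct lattice points can occur and the coefficient reduces to the single multinomial coefficient $\binom{p-1}{a,b,c}$ with $a = (p-2)/3$, $b = 4(2p-1)/15$ and $c = (2p-1)/15$. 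Since each of $a$, $b$, $c$ lies in $[0,p-1]$, the factorials $a!$, $b!$, $c!$ and $(p-1)!$ are all products of integers strictly smaller than $p$, so $p$ divides none of them and the multinomial coefficient is a $p$-adic unit; thus its image $c_{3p-2, p-1, p-2}$ in $K$ is nonzero. No genuine obstacle arises beyond verifying this non-cancellation together with the $p$-integrality, both of which are already settled by Lemmas \ref{lem:coeff-2-2} and \ref{lem:coeff2-2-2}. Together with Proposition \ref{prop:main1-2}, this completes the determination of superspeciality in the regime $p \equiv 2 \pmod{3}$.
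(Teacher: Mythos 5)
Your proof is correct and follows exactly the paper's own route: cite Lemma \ref{lem:coeff2-2-2} (itself resting on the singleton solution set from Lemma \ref{lem:coeff-2-2}) for the nonvanishing of the coefficient of $x^{3p-2}y^{p-1}z^{p-2}$ in $(F)^{p-1}$, then conclude by Proposition \ref{cor:HW-2}. The extra care you take in explaining why no cancellation can occur and why the multinomial coefficient is prime to $p$ is precisely the content the paper places inside those two lemmas, so nothing differs in substance.
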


\begin{proof}
It follows from Lemma \ref{lem:coeff2-2-2} that the coefficient of $x^{3p-2} y^{p-1} z^{p-2}$ in $(F)^{p-1}$ is not zero.
By Proposition \ref{cor:HW-2}, the desingularization of $V(F)$ is not superspecial.
\end{proof}

%===============================
\subsubsection{Case of $p \equiv 1 \pmod{3}$}
%===============================
Next, we consider the case of $p \equiv 1 \pmod{3}$.

\begin{lem}\label{lem:coeff-2}
Assume $p \equiv 1 \pmod{3}$.
Then we have $\# S (2 p- 2, 2 p - 2, p-1) = 1$.
In other words, the system \eqref{eq:system3-2} with $(i,j,k) = (2 p- 2, 2 p - 2, p-1)$ has a unique solution in $[0,p-1]^{\oplus 3}$.
The solution is given by
\begin{eqnarray}
\left( \begin{array}{ccc}
a, & b, & c
\end{array} \right)
 = 
\left( \begin{array}{ccc}
 (p-1)/3, & (p-1)/3, & (p-1)/3
 \end{array} \right) . \label{eq:sol-2}
\end{eqnarray}
\end{lem}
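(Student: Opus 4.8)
The plan is to mirror the proof of Lemma \ref{lem:coeff-2-2}, using the triangular reformulation \eqref{eq:system3-2} of the original system \eqref{eq:system-2}. First I would substitute the target exponent vector $(i,j,k) = (2p-2, 2p-2, p-1)$ into \eqref{eq:system3-2} and simplify the right-hand sides. For the middle equation one computes $-j + (p-1) = -(2p-2) + (p-1) = -(p-1)$, and for the bottom equation $i + 4j - 5(p-1) = (2p-2) + 4(2p-2) - 5(p-1) = 5(p-1)$. Thus the system to be solved, with $(a,b,c) \in [0,p-1]^{\oplus 3}$, becomes $a + b + c = p-1$, $b - 4c = -(p-1)$, and $15 c = 5(p-1)$.

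Next I would solve this triangular system from the bottom up. The last equation gives $15c = 5(p-1)$, i.e.\ $c = (p-1)/3$; here the hypothesis $p \equiv 1 \pmod{3}$ is exactly what guarantees that $(p-1)/3$ is a nonnegative integer, and clearly $(p-1)/3 \leq p-1$. Back-substituting into $b - 4c = -(p-1)$ yields $b = 4c - (p-1) = 4(p-1)/3 - (p-1) = (p-1)/3$, and the first equation $a + b + c = p-1$ then forces $a = (p-1) - 2(p-1)/3 = (p-1)/3$. Hence the triple \eqref{eq:sol-2}, namely $(a,b,c) = ((p-1)/3,(p-1)/3,(p-1)/3)$, lies in $[0,p-1]^{\oplus 3}$ and solves the system.

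For uniqueness I would observe that \eqref{eq:system3-2} is already in triangular form over $\mathbb{Q}$: the last equation pins down $c$, the middle one then pins down $b$, and the top one pins down $a$. So there is at most one rational solution, and a fortiori at most one solution in $[0,p-1]^{\oplus 3}$; combined with the explicit solution found above this gives $\# S(2p-2, 2p-2, p-1) = 1$.

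There is essentially no obstacle in this argument: the entire content is the elementary substitution and back-substitution, and the only place the hypothesis is used is the integrality of $c = (p-1)/3$, which $p \equiv 1 \pmod{3}$ provides. I would point out that this lemma is simply the $p \equiv 1 \pmod{3}$ analogue of Lemma \ref{lem:coeff-2-2}, and that it will subsequently be used — exactly as in Lemma \ref{lem:coeff2-2-2} — to exhibit a single surviving term and thereby a nonzero coefficient of $(x^2 y^2 z)^{p-1}$ in $(F)^{p-1}$, showing that $T_p$ is not superspecial when $p \equiv 1 \pmod{3}$.
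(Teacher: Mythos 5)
Your proof is correct and matches the paper's argument essentially verbatim: both substitute $(i,j,k)=(2p-2,2p-2,p-1)$ into the triangular system \eqref{eq:system3-2}, use $p \equiv 1 \pmod{3}$ to get $c=(p-1)/3$ from $15c = 5(p-1)$, and back-substitute to obtain $a=b=c=(p-1)/3$, with uniqueness immediate from the triangular form. Your explicit verification that the solution lies in $[0,p-1]^{\oplus 3}$ and your remark on the subsequent use in Lemma \ref{lem:coeff2-2} are consistent with the paper.
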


\begin{proof}
The system to be solved with $(i,j,k) = (2 p- 2, 2 p - 2, p-1)$ is given by
\begin{numcases}
  {}
  a + b + c = p-1, & \label{eq:1-2}\\
  b - 4 c = - ( p - 1), & \label{eq:2-2}\\
  15 c =  5 ( p - 1 ) & \label{eq:3-2} 
\end{numcases}
with $(a, b, c) \in [0,p-1]^{\oplus 3}$.
Since $p - 1$ is divided by $3$ from our assumption, it follows from \eqref{eq:3-2} that $c = (p-1)/3$.
By \eqref{eq:2-2} and \eqref{eq:1-2}, we have $a = b = (p-1)/3$.
\end{proof}

\begin{lem}\label{lem:coeff2-2}
Assume $p \equiv 1 \pmod{3}$.
Then the coefficient of the monomial $x^{2 p-2} y^{2p-2} z^{p-1}$ in $(F)^{p-1}$ is not zero.
\end{lem}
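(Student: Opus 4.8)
The plan is to follow the same template as the proof of Lemma \ref{lem:coeff2-2-2}, reducing the assertion to a single $p$-divisibility statement about a multinomial coefficient. First I would recall from the expansion \eqref{eq:sum-2} that the coefficient $c_{2p-2,2p-2,p-1}$ of the monomial $x^{2p-2}y^{2p-2}z^{p-1}$ in $(F)^{p-1}$ is
\[
c_{2p-2,2p-2,p-1} = \sum_{(a,b,c)\in S(2p-2,2p-2,p-1)} s^{b} t^{c} \cdot \binom{p-1}{a,b,c},
\]
where $S(2p-2,2p-2,p-1)$ is the solution set defined in \eqref{eq:sol_set-2}.

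The decisive input is the preceding Lemma \ref{lem:coeff-2}: under the hypothesis $p \equiv 1 \pmod{3}$ it identifies $S(2p-2,2p-2,p-1)$ as a singleton, whose only element is the solution \eqref{eq:sol-2}, namely $(a,b,c)=((p-1)/3,(p-1)/3,(p-1)/3)$. Substituting this one term collapses the sum to
\[
c_{2p-2,2p-2,p-1} = s^{(p-1)/3}\, t^{(p-1)/3} \cdot \frac{(p-1)!}{\left(\left((p-1)/3\right)!\right)^{3}}.
\]

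It then remains only to verify that this element of $K$ is nonzero. Since $s,t \in K^{\times}$, the two power factors are units, so it suffices to check that the integer $\binom{p-1}{(p-1)/3,(p-1)/3,(p-1)/3}$ is not divisible by $p$, and hence nonzero in characteristic $p$. This is immediate: $(p-1)!$ is the product of the integers $1,\dots,p-1$, all coprime to the prime $p$, so $p \nmid (p-1)!$; as the multinomial coefficient is a positive-integer divisor of $(p-1)!$, every prime factor of it is likewise strictly below $p$, whence $p$ does not divide it. I do not anticipate any genuine obstacle here, since all of the combinatorial content already resides in the singleton count of Lemma \ref{lem:coeff-2}, leaving only the elementary remark that factorials of arguments below $p$ carry no factor of $p$.
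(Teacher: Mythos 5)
Your proposal is correct and follows essentially the same route as the paper: invoke Lemma \ref{lem:coeff-2} to collapse the sum \eqref{eq:sum-2} to the single term indexed by $(a,b,c)=((p-1)/3,(p-1)/3,(p-1)/3)$, then observe that the resulting multinomial coefficient is prime to $p$. In fact you are slightly more careful than the paper, which omits the unit factor $s^{(p-1)/3}t^{(p-1)/3}$ from the displayed coefficient; your handling of it is the correct reading.
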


\begin{proof}
Let $c_{2 p- 2,2p-2,p-1}$ be the coefficient of $x^{2 p- 2} y^{2 p-2 } z^{p-1}$ in $(F)^{p-1}$.
Recall from \eqref{eq:sum-2} that $c_{2p-2, 2p-2, p-1}$ is given by
\[
\sum_{(a,b,c) \in S(2 p -2, 2 p-2, p-1)} s^b t^c \cdot \binom{p-1}{a,b,c} ,
\]
where $S (2 p-2 ,2p-2, p-1)$ is defined in \eqref{eq:sol_set-2}.
By Lemma \ref{lem:coeff-2}, the set $S(2 p -2, 2 p - 2, p-1)$ consists of only the element given by \eqref{eq:sol-2}, and hence
\[
c_{2 p-2, 2 p-2, p -1} = \cfrac{(p-1)!}{\left( \cfrac{p-1}{3} \right) ! \left( \cfrac{p-1}{3} \right) ! \left( \cfrac{p-1}{3} \right) !},
\]
which is not divisible by $p$.
\end{proof}

% Assume that $p = 1 \pmod{3}$ and that the system has a solution for $(i,j,k,\ell)$.
% In this case, the exponent $i$ is $2 p -2$ or $p-1$, and thus $(i,j,k,\ell) = (2 p -2, p-1, p-1, p-1)$, $(p-1,2p-2,p-1,p-1)$, $(p -1, 2p-1, p-1, p-2)$, $(p-1,p-1,2p-2,p-1)$, $(p-1,p-1,p-1,2p-2)$ or $(p-1,p-2,p-1,2p-1)$.
% Since $\ell=j \pmod{3}$, we have $(i,j,k,\ell) = (2 p -2, p-1, p-1, p-1)$, $(p-1,2p-2,p-1,p-1)$, $(p-1,p-1,2p-2,p-1)$, or $(p-1,p-1,p-1,2p-2)$.

\begin{prop}\label{prop:main-2}
Assume $p \equiv 1 \pmod{3}$.
Then the desingularization $T_p$ of $V(F)$ is not superspecial.
\end{prop}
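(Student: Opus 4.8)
The plan is to apply directly the Hasse-Witt criterion recorded in Proposition \ref{cor:HW-2}, according to which $T_p$ is superspecial if and only if the coefficients of all $25$ listed monomials of degree $5(p-1)$ in $(F)^{p-1}$ vanish. To prove the negative statement it therefore suffices to produce a single one of these $25$ monomials with a nonzero coefficient. I would take the monomial $(x^2 y^2 z)^{p-1} = x^{2p-2} y^{2p-2} z^{p-1}$, which occurs in the third row of the table in Proposition \ref{cor:HW-2}.

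The key input is Lemma \ref{lem:coeff2-2}: under the standing hypothesis $p \equiv 1 \pmod 3$, the coefficient $c_{2p-2,2p-2,p-1}$ of this monomial is nonzero. That lemma rests in turn on Lemma \ref{lem:coeff-2}, which identifies the solution set $S(2p-2,2p-2,p-1)$ of the linear system \eqref{eq:system3-2} as the single triple $(a,b,c) = ((p-1)/3,(p-1)/3,(p-1)/3)$. Consequently, by the expansion \eqref{eq:sum-2}, the coefficient collapses to the one term $s^{(p-1)/3} t^{(p-1)/3} \binom{p-1}{(p-1)/3,(p-1)/3,(p-1)/3}$, whose multinomial factor is a unit modulo $p$ because each of the three lower entries is strictly smaller than $p$.

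Putting the two steps together gives the conclusion immediately: a single nonzero coefficient among the $25$ monomials violates the criterion of Proposition \ref{cor:HW-2}, so $T_p$ is not superspecial. I anticipate no real obstacle, since all of the arithmetic has already been carried out in Lemmas \ref{lem:coeff-2} and \ref{lem:coeff2-2}; the only point to verify is that $(x^2 y^2 z)^{p-1}$ genuinely belongs to the list of monomials in Proposition \ref{cor:HW-2}, which it does. This argument is the exact $p \equiv 1 \pmod 3$ analogue of the proof of Proposition \ref{prop:main-2-2}, with the monomial $x^{3p-2} y^{p-1} z^{p-2}$ used there replaced by $(x^2 y^2 z)^{p-1}$ here.
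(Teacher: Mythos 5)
Your proof is correct and is essentially identical to the paper's own argument: the paper likewise invokes Lemma \ref{lem:coeff2-2} (nonvanishing of the coefficient of $x^{2p-2}y^{2p-2}z^{p-1}$, which rests on the uniqueness of the solution in Lemma \ref{lem:coeff-2}) and then concludes via the criterion of Proposition \ref{cor:HW-2}. Your added remark tracking the unit factor $s^{(p-1)/3}t^{(p-1)/3}$ is a harmless (indeed slightly more careful) elaboration of the same computation.
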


\begin{proof}
It follows from Lemma \ref{lem:coeff2-2} that the coefficient of $x^{2p-2} y^{2p-2} z^{p-1}$ in $(F)^{p-1}$ is not zero.
By Proposition \ref{cor:HW-2}, the desingularization $T_p$ of $V(F)$ is not superspecial.
\end{proof}

%============================================================
\subsubsection{Proofs of the superspeciality of $T_p$}\label{sec:main-2}
%============================================================
% As in the previous section, let $K$ be a perfect field of characteristic $p>2$.

\begin{theor}\label{thm:main-2}
Let $(s, t)$ be a pair of elements in $\mathbb{F}_{q}^{\times}$.
Put $F := x y z^3 + s x^5 + t y^5$.
Let $ V(F)$ denote the projective zero-locus in $\mathbf{P}^2 = \mathrm{Proj}(\overline{K}[x,y,z])$ defined by $F=0$.
Then the desingularization $T_p$ of $V(F)$ is a superspecial trigonal curve of genus $5$ if and only if $p \equiv 2 \pmod{3}$ and $p \equiv 1, 4 \pmod{5}$.
\end{theor}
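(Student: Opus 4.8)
The plan is to assemble Theorem~\ref{thm:main-2} from the lemmas and propositions already established, by a case analysis on the residue of $p$ modulo $15$. First I would separate the purely geometric assertion---that $T_p$ is a trigonal curve of genus $5$---from the arithmetic one. By the irreducibility lemma proved above, $F$ is an irreducible quintic form, and by Lemma~\ref{lem:sing-2} (recall we assume $p>5$) its projective zero locus $V(F)$ has the single singular point $[0:0:1]$. In the affine chart $z=1$ the equation reads $xy + sx^5 + ty^5$, whose tangent cone $xy$ is a pair of distinct lines, so $[0:0:1]$ is an ordinary double point with $\delta$-invariant $1$. Hence the smooth model $T_p$ has geometric genus $6-1=5$, and the pencil $[x:y]$ (under which $F$ is a cubic in $z$) realizes the $g^1_3$; this is exactly the description in \cite[Section~2]{KH18}. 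Once this is in place, superspeciality is governed entirely by Proposition~\ref{cor:HW-2}: $T_p$ is superspecial if and only if all $25$ coefficients of $(F)^{p-1}$ listed there vanish.

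For the arithmetic dichotomy I would run through the residues of $p$ coprime to $15$. The implication ``$\Leftarrow$'' is precisely Proposition~\ref{prop:main1-2}: when $p\equiv 2\pmod 3$ and $p\equiv 1$ or $4\pmod 5$, all $25$ coefficients vanish, so $T_p$ is superspecial. For ``$\Rightarrow$'' I argue by contraposition, exhibiting in each remaining class a single nonzero coefficient. Proposition~\ref{prop:main-2} handles $p\equiv 1\pmod 3$ through the monomial $x^{2p-2}y^{2p-2}z^{p-1}$, and Proposition~\ref{prop:main-2-2} handles $p\equiv 2\pmod 3,\ p\equiv 3\pmod 5$ through $x^{3p-2}y^{p-1}z^{p-2}$.

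The one class not covered by these three propositions is $p\equiv 2\pmod 3$ together with $p\equiv 2\pmod 5$, i.e.\ $p\equiv 2\pmod{15}$ (the smallest instance being $p=17$); this is the step I expect to require the most care, because the emptiness statement of Lemma~\ref{lem:coeff-1-2} holds only for $p\equiv 1,4\pmod 5$, and when $p\equiv 2\pmod 5$ several of the differences $i-j$ in that proof become divisible by $5$. To treat it I would reproduce the computation of Lemmas~\ref{lem:coeff-2-2} and~\ref{lem:coeff2-2-2} for the monomial $x^{3p-1}y^{p-2}z^{p-2}$, whose exponents satisfy $i-j=2p+1\equiv 0\pmod 5$. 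Solving system~\eqref{eq:system3-2} with $(i,j,k)=(3p-1,p-2,p-2)$ gives $15c=2p-4$, hence the unique rational solution $(a,b,c)=\bigl((p-2)/3,\ (8p-1)/15,\ (2p-4)/15\bigr)$; the congruences $p\equiv 2\pmod 3$ and $p\equiv 2\pmod 5$ force each entry to be a nonnegative integer strictly less than $p$ (for $p=17$ one gets $(5,9,2)$). Thus $S(3p-1,p-2,p-2)$ is a singleton, the corresponding coefficient equals the single term $s^{b}t^{c}\binom{p-1}{a,b,c}$, and this is nonzero in $K$ since $p\nmid\binom{p-1}{a,b,c}$ and $s,t\in K^{\times}$. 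By Proposition~\ref{cor:HW-2}, $T_p$ is not superspecial here.

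Collecting the four cases exhausts all primes $p>5$ coprime to $15$ (while $p=3,5$ are excluded by the standing hypothesis and covered by the enumeration in \cite{KH18}), which yields exactly the stated equivalence. The genuine mathematical content lies in the coefficient computations already carried out in the lemmas; the only new ingredient needed to close the main theorem is the $p\equiv 2\pmod{15}$ computation above, which I regard as the main obstacle, since it is the single residue class the preceding propositions leave open.
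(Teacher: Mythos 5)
Your proposal is correct and follows the paper's overall strategy: first get that $T_p$ is trigonal of genus $5$ from irreducibility of $F$ and the single node at $[0:0:1]$ (the paper just cites \cite[Section 2]{KH18} for this, while you spell out the $\delta$-invariant computation), then reduce superspeciality to the vanishing of the $25$ coefficients in Proposition~\ref{cor:HW-2} and run through residue classes via Propositions~\ref{prop:main1-2}, \ref{prop:main-2-2} and \ref{prop:main-2}. The significant difference is your fourth case, and it is not optional: the paper's proof consists of the single claim that the assertion follows from those three propositions, but, exactly as you observe, they only cover $p \equiv 1 \pmod{3}$, and $p \equiv 2 \pmod{3}$ with $p \equiv 1, 3, 4 \pmod{5}$; the class $p \equiv 2 \pmod{15}$ (e.g.\ $p = 17, 47, 107$) is left untreated, so the ``only if'' direction is incomplete as the paper states it. Your repair is correct: the monomial $x^{3p-1}y^{p-2}z^{p-2}$ does appear among the $25$ monomials of Proposition~\ref{cor:HW-2}, its exponents satisfy $i - j = 2p+1 \equiv 0 \pmod{5}$ when $p \equiv 2 \pmod 5$, and solving \eqref{eq:system3-2} gives the unique solution $(a,b,c) = \bigl((p-2)/3,\,(8p-1)/15,\,(2p-4)/15\bigr)$, whose entries are integers precisely because $p \equiv 2 \pmod{15}$ and lie in $[0,p-1]$; hence the coefficient equals $s^b t^c \binom{p-1}{a,b,c}$, which is prime to $p$ since all factorials involved are of integers less than $p$, and Proposition~\ref{cor:HW-2} yields non-superspeciality. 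In short, your write-up does not merely reproduce the paper's argument---it completes it, supplying the one residue class the paper's own case analysis omits.
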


\begin{proof}
Since $F$ is an irreducible quintic form over $K$, the desingularization $T_p$ of $V(F)$ is a trigonal curve of genus $5$ over $K$, see \cite[Section 2]{KH18}.
The assertion follows from Propositions \ref{prop:main1-2}, \ref{prop:main-2-2} and \ref{prop:main-2}.
\end{proof}

\begin{cor}\label{cor:main-2}
There exist superspecial trigonal curves of genus $5$ in characteristic $p$ for infinitely many primes $p$.
The set of primes $p$ for which $T_p$ is superspecial has natural density $1/4$. 
\end{cor}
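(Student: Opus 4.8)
The plan is to reduce the statement to a purely arithmetic assertion about primes in prescribed residue classes and then invoke the classical theorems of Dirichlet. By Theorem \ref{thm:main-2}, the curve $T_p$ is superspecial exactly when $p \equiv 2 \pmod 3$ together with $p \equiv 1 \pmod 5$ or $p \equiv 4 \pmod 5$. Since $\gcd(3,5)=1$, I would use the Chinese Remainder Theorem to merge each compatible pair of congruences into a single congruence modulo $15$: the pair $p \equiv 2 \pmod 3$, $p \equiv 1 \pmod 5$ is equivalent to $p \equiv 11 \pmod{15}$, while the pair $p \equiv 2 \pmod 3$, $p \equiv 4 \pmod 5$ is equivalent to $p \equiv -1 \equiv 14 \pmod{15}$. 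Hence the set of primes $p$ for which $T_p$ is superspecial is precisely the union of the two residue classes $11$ and $-1$ modulo $15$.

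First I would record that both residues $11$ and $14$ are coprime to $15$, so that Dirichlet's theorem on primes in arithmetic progressions applies to each of the two classes. This yields the first assertion at once: each of the classes $p \equiv 11 \pmod{15}$ and $p \equiv -1 \pmod{15}$ contains infinitely many primes, and therefore $T_p$ is a superspecial trigonal curve of genus $5$ for infinitely many primes $p$.

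For the density claim I would appeal to the quantitative form of Dirichlet's theorem, namely the prime number theorem for arithmetic progressions, which states that the primes are equidistributed among the $\varphi(15)$ reduced residue classes modulo $15$, each such class having natural density $1/\varphi(15)$ within the set of all primes. Since $\varphi(15) = \varphi(3)\varphi(5) = 2 \cdot 4 = 8$ and the superspecial primes occupy exactly $2$ of these $8$ classes, the natural density of the set of superspecial primes equals $2/8 = 1/4$, as claimed.

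Once Theorem \ref{thm:main-2} is available, the argument is entirely routine and presents no genuine obstacle; the only point warranting care is the meaning of ``natural density.'' Because the set of all primes already has density zero among the integers, the density $1/4$ must be read relative to the primes, and it is exactly this relative (Dirichlet) density that the prime number theorem for arithmetic progressions computes.
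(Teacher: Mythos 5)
Your proposal is correct and follows essentially the same route as the paper's own proof: both reduce via the Chinese Remainder Theorem to the residue classes $11$ and $14$ modulo $15$, invoke Dirichlet's theorem for the infinitude claim, and compute the density as $\frac{1}{\varphi(15)} + \frac{1}{\varphi(15)} = \frac{1}{8} + \frac{1}{8} = \frac{1}{4}$. Your closing remark that the density is relative to the set of primes (via the prime number theorem for arithmetic progressions) is a sound clarification of what the paper leaves implicit.
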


\begin{proof}
Note that $p \equiv 2 \pmod{3}$ and $p \equiv 1 \pmod{5}$ (resp.\ $p \equiv 2 \pmod{3}$ and $p \equiv 4 \pmod{5}$) is equivalent to $p \equiv 11 \pmod{15}$ (resp.\ $p \equiv 14 \pmod{15}$).
Since both $11$ and $14$ are coprime to $15$, it follows from Dirichlet's Theorem that there are infinitely many primes congruent to $11$ or $14$ modulo $15$.
Thus, the first claim follows from Theorem \ref{thm:main-2}.
The second claim is deduced from the fact that the natural density of primes equal to $11$ or $14$ modulo $15$ is 
\[
\cfrac{1}{\varphi (15)} + \cfrac{1}{\varphi (15)} = \cfrac{1}{8} + \cfrac{1}{8} = \cfrac{1}{4},
\]
where $\varphi$ is Euler's totient function.%\qed 
\end{proof}

\begin{prob}
Does there exist a superspecial trigonal curve of genus $5$ in characteristic $p$ for each of the following case?
\begin{enumerate}
\item[$(1)$] $p \equiv 1 \pmod{3}$.
Cf.\ the non-existence for $p=7$ $($and $13$ over $\mathbb{F}_{13})$ is already shown in {\rm \cite{KH18}}.
\item[$(2)$] $p \equiv 2 \pmod{3}$ and $p \equiv 3 \pmod{5}$.
\end{enumerate}
\end{prob}

\if 0
\begin{prob}
Find a different condition from $p \equiv 2 \pmod{3}$ such that there exists a nonhyperelliptic superspecial curve of genus $4$ in characteristic $p$.
% Cf.\ in the case of $g=1$, the elliptic curve $E : y^2 = x^3 + x$ is supersingular if $p \equiv 3 \pmod{4}$ and ordinary if $p \equiv 1 \pmod{4}$.
% $($Also for hyperelliptic curves, such conditions are already found, see e.g., {\rm \cite{Taf}} and {\rm \cite{Taf2}}.$)$
\end{prob}
\fi

%=================================================
\subsection{Application: Finding maximal curves over $K = \mathbb{F}_{p^2}$ for large $p$}\label{sec:comp-2}
%=================================================
In the following, we set $K := \mathbb{F}_{p^2}$.
It is known that any maximal or minimal curve over $\mathbb{F}_{p^2}$ is supersepcial.
Conversely, any superspecial curve over an algebraically closed field descends to a maximal or minimal curve over $\mathbb{F}_{p^2}$, see the proof of \cite[Proposition 2.2.1]{KH16}.
% Recall from Theorem \ref{thm:main-2} that the desingularization of $C_p = V(F)$ with $F = x y z^3 + s x^5 + t y^5$ is a superspecial trigonal curve of genus $5$ if and only if $p \equiv 2 \pmod{3}$ and $p \equiv 1, 4 \pmod{5}$.
Under the condition given in Theorem \ref{thm:main-2}, we computed the number of $\mathbb{F}_{p^2}$-rational points on $T_p$ with $s=t=1$ for $7 \leq p \leq 1000$ using a computer algebra system Magma \cite{Magma}.
Table \ref{table:3-2} shows our computational results for $7 \leq p \leq 179$.
We see from Table \ref{table:3-2} that any superspecial $T_p$ is maximal over $\mathbb{F}_{p^2}$ for $7 \leq p \leq 179$ (also for $180 \leq p \leq 1000$, but omit to write them in the table).
From our computational results, let us give a conjecture on the existence of $\mathbb{F}_{p^2}$-maximal (trigonal) curves of genus $5$.
\begin{itemize}
\item For any $p$ with $p \equiv 2 \pmod{3}$ and $p \equiv 1, 4 \pmod{5}$, the desingularization $T_p$ of $V(F)$ over $\mathbb{F}_{p^2}$ is maximal.
\end{itemize}
In the next subsection we prove this conjecture (in fact, the condition $p \equiv 2 \pmod{3}$ and $p \equiv 1, 4 \pmod{5}$ is a necessary and sufficient condition for the maximality of $T_p$).

\renewcommand{\arraystretch}{1.5}
\begin{table}[t]
\centering{
\caption{The number of $\mathbb{F}_{p^2}$-rational points on the desingularization $T_p$ of $ V(F)$ for $7 \leq p \leq 100$ with $p \equiv 2 \pmod{3}$, where $F = x y z^3 + x^5 + y^5$.
We denote by $\#T_p (\mathbb{F}_{p^2})$ the number of $\mathbb{F}_{p^2}$-rational points on $T_p$ for each $p$.
}
\label{table:3-2}
\scalebox{0.97}{
\begin{tabular}{c||c|c|c||c||c|c|c} \hline
$p$ & $p \bmod{5}$ & S.sp.\ or not & $\# T_p ( \mathbb{F}_{p^2} )$  & $p$ & $p \bmod{5}$ & S.sp.\ or not & $\# T_p ( \mathbb{F}_{p^2} )$ \\ \hline
$11$ & $1$ & S.sp. & $232$ (Max.)             &  $89$ & $4$ & S.sp. & $8812$ (Max.) \\ \hline
$29$ & $4$ & S.sp. & $1132$ (Max.)    &  $101$ & $1$ & S.sp.  & $11212$ (Max.)\\ \hline
$41$ & $1$ &  S.sp. & $2092$ (Max.) & $131$ & $1$ & S.sp. &  $18472$ (Max.) \\ \hline
 $59$ & $4$ & S.sp. & $4072$ (Max.) & $149$  &  $4$ & S.sp. & $23692$ (Max.) \\ \hline
$71$ & $1$ & S.sp. & $5752$ (Max.)   &       $179$    & $4$ & S.sp.  & $33832$ (Max.) \\ \hline
\end{tabular}
}}
\end{table}

% \begin{conje}

% \end{conje}

%================================
\section{Main results}
%================================

Let $T_p$ denote the non-singular model of $x^5 + y^5 + x y z^3 = 0$.
In this section, we prove that $T_p$ is $\mathbb{F}_{p^2}$-maximal if and only if $p \equiv 2 \pmod{3}$ and $p \equiv 1, 4 \pmod{5}$.
Furthermore, we give a family of $\mathbb{F}_{p^2}$-maximal curves defined by equations of a more general form.

%================================
\subsection{Maximality of $T_p : x^5 + y^5 + x y z^3 = 0$}
%================================

\begin{theor}\label{thm:genus5}
Let $V(F)$ denote the projective zero-locus in $\mathbf{P}^2$ defined by $F=x y z^3 + x^5 + y^5=0$.
Then the desingularization $T_p$ of $V(F)$ is an $\mathbb{F}_{p^2}$-maximal trigonal curve of genus $5$ if and only if $p \equiv -1 \pmod{15}$ or $p \equiv 11 \pmod{15}$.
\end{theor}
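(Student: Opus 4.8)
The plan is to prove the two implications separately, since the superspeciality analysis of Section~\ref{sec:pre-2} already settles one of them and the genus-$5$ trigonal property of $T_p$ has been established earlier (irreducibility of $F$, the unique singular point, and the identification with a trigonal curve via \cite{KH18}).

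For the ``only if'' direction I would use that every $\mathbb{F}_{p^2}$-maximal curve is superspecial, as recorded in Section~\ref{sec:intro}. Hence if $T_p$ is $\mathbb{F}_{p^2}$-maximal, it is superspecial, and Theorem~\ref{thm:main-2} forces $p \equiv 2 \pmod 3$ together with $p \equiv 1,4 \pmod 5$. Running these simultaneous congruences through the Chinese Remainder Theorem gives precisely $p \equiv 11 \pmod{15}$ (from $p \equiv 2$, $p\equiv 1$) and $p \equiv 14 \equiv -1 \pmod{15}$ (from $p \equiv 2$, $p \equiv 4$), which is the asserted necessary condition. This direction is essentially free.

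The substance is the ``if'' direction, i.e.\ producing enough $\mathbb{F}_{p^2}$-rational points when $p \equiv -1$ or $11 \pmod{15}$. Here I would recognize $T_p$ as the special member $C_{p,5,3,1}$ of the family of Theorem~\ref{thm:general}: taking $(a,b,c)=(5,3,1)$ we have $b+2c = 3+2 = 5 = a$ and $n = ab/\mathrm{gcd}(a,b,c) = 15$, while the defining equation $x^a + y^a + z^b x^c y^c = x^5 + y^5 + x y z^3$ is exactly $F$. Theorem~\ref{thm:general}(1) then gives maximality when $p \equiv -1 \pmod{15}$. For the other class I would appeal to Theorem~\ref{thm:general}(2): since $a=5$ and $b=3$ are coprime, the integer $d \in [0,15)$ with $d \equiv 2 \pmod 5$ and $d \equiv 0 \pmod 3$ is $d = 12$, so part (2) yields maximality for $p \equiv d-1 = 11 \pmod{15}$. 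The two parts together cover exactly the two residue classes, completing sufficiency.

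The main obstacle lies inside Theorem~\ref{thm:general}, on which the sufficiency rests: one must realize $T_p$ as a subcover of a maximal curve. After dehomogenizing ($u = y/x$, $v = z/x$) the curve becomes $v^3 = -(1+u^5)/u$, a degree-$3$ cyclic cover of the $u$-line, and the task is to exhibit an explicit non-constant $\mathbb{F}_{p^2}$-morphism from the Hermitian curve (equivalently from a Fermat curve $X^N + Y^N + Z^N = 0$ with $N \mid p+1$) onto it, so that Serre's covering result forces maximality. I expect the delicate point to be that the two congruence classes behave differently: when $p \equiv -1 \pmod{15}$ both $3$ and $5$ divide $p+1$, whereas when $p \equiv 11 \pmod{15}$ one has $3 \mid p+1$ but $5 \mid p-1$, so the covering must be twisted, which is precisely what the auxiliary exponent $d$ in Theorem~\ref{thm:general}(2) encodes. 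Constructing these two maps explicitly and verifying that they are defined over $\mathbb{F}_{p^2}$ and non-constant is the real content; the genus and branching bookkeeping (Riemann--Hurwitz confirms $g = 5$) is routine.
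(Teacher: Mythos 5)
Your proposal is correct and follows essentially the same route as the paper: the ``only if'' direction via maximal $\Rightarrow$ superspecial together with Theorem~\ref{thm:main-2}, and the ``if'' direction via explicit Hermitian subcovers and Serre's covering result, since the two coverings the paper writes down in its proof, $(Y,Z)\mapsto\left(Y^{3m},Y^{-m}Z^{5m}\right)$ and $(Y,Z)\mapsto\left(Y^{3m+2},Y^{-(m+1)}Z^{5m+4}\right)$, are exactly the $(a,b,c)=(5,3,1)$, $d=12$ specialization of Theorem~\ref{thm:general} that you invoke (the paper itself remarks that Theorem~\ref{thm:genus5} is this special case, and the proof of Theorem~\ref{thm:general} is independent of it, so your reduction is not circular). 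Your value $d=12$ is the correct one; the paper's closing remark misprints it as $d=15$, while its Table~\ref{table:3-3} agrees with your computation.
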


\begin{proof}
First, we suppose that $T_p$ is $\mathbb{F}_{p^2}$-maximal.
Since an $\mathbb{F}_{p^2}$-maximal curve is superspecial, the curve $T_p$ is superspecial.
By Theorem \ref{thm:main-2}, we have $p \equiv -1 \pmod{15}$ or $p \equiv 11 \pmod{15}$.

Conversely, assume $p \equiv -1 \pmod{15}$ or $p \equiv 11 \pmod{15}$.
It suffices to show that $T_p$ is covered by an $\mathbb{F}_{p^2}$-maximal curve.
Note that we have $1 + y^5 + y z^3 = 0$ for $x=1$.
% Let $\mathcal{H}_{p+1}$ denote the Hermitian curve in $\mathbf{P}^2$ defined by the affine equation $Y^{p+1} + Z^{p+1} = 1$. 
\begin{description}
\item[Case of $p \equiv -1 \pmod{15}$.] There exists an integer $m$ such that $p+1 = 15m$.
% Putting $y \mapsto y^{3m}$, and $z \mapsto y^{4m} z^{5m}$, we have $1 + (y^{3m})^5 + y^{3m} ( y^{4m}z^{5m})^3 = 1 + y^{15m} + ( y z )^{15m} = 1 + y^{p+1} + Z^{p+1} = 0$, which is a Hermitian curve.
It follows from the following morphism
% \begin{eqnarray}
% \mathcal{H}_{p+1} & \rightarrow & 1 + y^5 + y z^3 = 0 \nonumber \\
% (Y,Z) & \mapsto & \left( Y^{3 m}, Y^{-m}Z^{5 m} \right) \nonumber
% \end{eqnarray}
\begin{equation}
\left\{
\begin{array}{ccc}
\mathcal{H}_{p+1}: 1 + Y^{p+1} + Z^{p+1} = 0 & \rightarrow & 1 + y^5 + y z^3 = 0  \\
(Y,Z) & \mapsto & \left( Y^{3 m}, Y^{-m}Z^{5 m} \right) 
\end{array}
\right. \nonumber
\end{equation}
that $T_p$ is covered by the maximal Hermitian curve $\mathcal{H}_{p+1}$.
Thus $T_p$ is also maximal over $\mathbb{F}_{p^2}$.
\item[Case of $p \equiv 11 \pmod{15}$.] In this case, there exists an integer $m$ such that $p+1 = 12 + 15m$.
% Note that we have $1+ y^5 = y z^3$ by substituting $(1, -z)$ into $(x,z)$.
It follows from the following morphism
\begin{equation}
\left\{
\begin{array}{ccc}
\mathcal{H}_{p+1} : Y^{p} + Y = - Z^{p+1}& \rightarrow & 1+y^5 =- y z^3    \\
(Y,Z) & \mapsto & \left( Y^{3 m+2}, Y^{- (m+1)}Z^{5 m+4} \right) 
\end{array}
\right. \nonumber
\end{equation}
% \begin{eqnarray}
% \mathcal{H}(p+1) : Y^{p} + Y = Z^{p+1} & \rightarrow & 1 + y^5 + y z^3 = 0  \nonumber \\
% (Y,Z) & \mapsto & \left( Y^{3 m+2}, Y^{- (m+1)}Z^{5 m+4} \right) \nonumber
% \end{eqnarray}
that $T_p$ is covered by $\mathcal{H}_{p+1}$.
Thus $T_p$ is also maximal over $\mathbb{F}_{p^2}$.
\end{description}
\end{proof}

% Putting $y \mapsto y^{3m+2}$, and $z \mapsto \cfrac{z^{5m+4}}{y^{m+1}}$, we have
% \[
% 1 + (y^{3 m+2})^5 + y^{3 m+2} \left( \cfrac{z^{5 m+4}}{y^{m+1}} \right)^3 = 1 + y^{15 m + 10} + \cfrac{z^{15 m+12}}{y} = 0
% \]
% Hence $y + y^{15 m + 11} + z^{15 m + 12} = y + y^p + z^{p+1} = 0$.
% , which is a Hermitian curve.

%================================
\subsection{A generalization of the maximal curve of the form $x^5 + y^5 + x y z^3 = 0$}
%================================

More generally, we consider the homogeneous equation $x^a + y^a + z^b x^c y^c = 0$ for natural numbers $a$, $b$ and $c$ with $b + 2 c = a$.
Let $C_{p,a,b,c}$ be the non-singular model of $x^a + y^a + z^b x^c y^c = 0$.
Note that $C_{p,a,b,c}$ has genus
\[
\frac{1}{2} \left( a b - a + 2 - \mathrm{gcd}(b,a-c) - \mathrm{gcd}(b,c) \right).
\]
Here, we give sufficient conditions under which $C_{p,a,b,c}$ is $\mathbb{F}_{p^2}$-maximal.

\begin{theor}\label{thm:general}
% Let $a$, $b$ and $c$ be natural numbers with $b + 2 c = a$.
% Put $n := a b / \mathrm{gcd}(a,b,c)$, and 
% \[
% g := \frac{1}{2} \left( a b - a + 2 - \mathrm{gcd}(b,a-c) - \mathrm{gcd}(b,c) \right).
% \]
% Let $C_{p,a,b,c}$ be the non-singular model of $x^a + y^a + z^b x^c y^c = 0$.
% Then we have the following:
% \begin{enumerate}
% \item[$(1)$] The curve $C_{p,a,b,c}$ of genus $g$ is maximal over $\mathbb{F}_{p^2}$ if $p \equiv -1 \pmod{n}$.
% \item[$(2)$] Suppose that $a$ and $b$ are coprime.
% Since there exists an integer $d$ such that $d \equiv 2 \pmod{a}$ and $d \equiv 0 \pmod{b}$, and since any two such $d$ are congruent modulo $a b$, we fix such a $d$ in the interval $[0, a b)$. 
% Then the curve $C_{p,a,b,c}$ of genus $g$ is maximal over $\mathbb{F}_{p^2}$ if $p \equiv d-1 \pmod{n}$.
% \end{enumerate}
Let $a$, $b$ and $c$ be natural numbers with $b + 2 c = a$.
Put $n := a b / \mathrm{gcd}(a,b,c)$.
Let $C_{p,a,b,c}$ be the non-singular model of $x^a + y^a + z^b x^c y^c = 0$.
Then we have the following:
\begin{enumerate}
\item[$(1)$] The curve $C_{p,a,b,c}$ is maximal over $\mathbb{F}_{p^2}$ if $p \equiv -1 \pmod{n}$.
\item[$(2)$] Suppose that $a$ and $b$ are coprime.
Since there exists an integer $d$ such that $d \equiv 2 \pmod{a}$ and $d \equiv 0 \pmod{b}$, and since any two such $d$ are congruent modulo $a b$, we fix such a $d$ in the interval $[0, a b)$. 
Then the curve $C_{p,a,b,c}$ is maximal over $\mathbb{F}_{p^2}$ if $p \equiv d-1 \pmod{n}$.
\end{enumerate}
\end{theor}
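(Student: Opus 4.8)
The plan is to deduce maximality from Serre's covering result quoted in Section~\ref{sec:intro}: it suffices, in each case, to exhibit a nonconstant morphism defined over $\mathbb{F}_{p^2}$ from a maximal Hermitian curve onto $C_{p,a,b,c}$. Following the template of the proof of Theorem~\ref{thm:genus5}, I dehomogenize by setting $x=1$, so that $C_{p,a,b,c}$ is the smooth model of the affine curve $1 + y^a + y^c z^b = 0$, and I search for monomial maps $(Y,Z) \mapsto (y,z) = (Y^{\alpha}, Y^{\beta}Z^{\gamma})$ pulling this equation back to the defining equation of a Hermitian curve. Such a rational map extends to a cover of the smooth projective models, and provided it is nonconstant the assertion follows.

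For part $(1)$ I would use the Fermat form $\mathcal{H}_{p+1}: 1 + Y^{p+1} + Z^{p+1} = 0$, which is $\mathbb{F}_{p^2}$-maximal. Substituting into $1 + y^a + y^c z^b$ gives $1 + Y^{a\alpha} + Y^{c\alpha + b\beta}Z^{b\gamma}$, so I impose $a\alpha = p+1$, $b\gamma = p+1$ and $c\alpha + b\beta = 0$, that is,
\[
\alpha = \frac{p+1}{a}, \qquad \gamma = \frac{p+1}{b}, \qquad \beta = -\frac{c(p+1)}{ab}.
\]
Writing $g := \gcd(a,b,c)$ and $n = ab/g$, the hypothesis $n \mid p+1$ yields $a \mid p+1$ and $b \mid p+1$ (since $g$ divides both $a$ and $b$, whence $a,b \mid ab/g$), so $\alpha,\gamma \in \mathbb{Z}$; and writing $p+1 = (ab/g)k$ one computes $\beta = -(c/g)k \in \mathbb{Z}$ because $g \mid c$. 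Thus the map is defined over $\mathbb{F}_{p^2}$ and, being nonconstant (as $\alpha,\gamma > 0$), realizes $C_{p,a,b,c}$ as a subcover of $\mathcal{H}_{p+1}$.

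For part $(2)$, where $\gcd(a,b)=1$ forces $g=1$ and $n=ab$, I would instead use the Artin--Schreier form $\mathcal{H}_{p+1}: Y^{p} + Y = -Z^{p+1}$. After multiplying the pulled-back equation by $Y^{-(c\alpha+b\beta)}$ I want it to read $Y + Y^{p} + Z^{p+1}=0$, which forces $c\alpha + b\beta = -1$, $a\alpha = p-1$ and $b\gamma = p+1$, giving
\[
\alpha = \frac{p-1}{a}, \qquad \gamma = \frac{p+1}{b}, \qquad \beta = \frac{-1 - c\alpha}{b}.
\]
The choice of $d$ with $d\equiv 2 \pmod a$, $d\equiv 0 \pmod b$ together with $p \equiv d-1 \pmod{ab}$ gives $p+1\equiv d \pmod{ab}$, hence $a \mid p-1$ and $b \mid p+1$, so $\alpha,\gamma\in\mathbb{Z}$.

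The main obstacle is the integrality of $\beta$ in part $(2)$, and this is exactly where the defining relation $b+2c=a$ is used. Since $\gcd(a,b)=1$ and $a=b+2c$, one has $\gcd(2c,b)=1$, so $2$ and $c$ are invertible modulo $b$ and $a \equiv 2c \pmod b$. Reducing $a\alpha = p-1$ modulo $b$ then gives $c\alpha \equiv c\,a^{-1}(p-1) \equiv 2^{-1}(p-1) \pmod b$, and since $b \mid p+1$ this is $\equiv -1 \pmod b$; hence $b \mid 1 + c\alpha$ and $\beta \in \mathbb{Z}$. It remains only to record that each map is nonconstant (immediate from $\alpha,\gamma > 0$) and that $Y^p + Y = -Z^{p+1}$ is $\mathbb{F}_{p^2}$-maximal, being a standard twist of the Hermitian curve; both cases then conclude by Serre's covering result.
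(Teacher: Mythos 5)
Your proposal is correct and takes essentially the same approach as the paper: both dehomogenize at $x=1$ and exhibit $C_{p,a,b,c}$ as a subcover of the Hermitian curve via the same monomial maps (Fermat form $1+Y^{p+1}+Z^{p+1}=0$ in case (1), Artin--Schreier form $Y^p+Y=-Z^{p+1}$ in case (2)), and your exponents $\alpha,\beta,\gamma$ coincide with the paper's after substituting $p+1=nm$ resp.\ $p+1=nm+d$. The only cosmetic difference is that you verify the integrality of $\beta$ in case (2) by a congruence argument with inverses modulo $b$, whereas the paper notes directly that $c(d-2)+a$ is divisible by both $a$ and $b$, hence by $ab$.
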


\begin{proof}
It suffices to show that $T_p$ is covered by an $\mathbb{F}_{p^2}$-maximal curve.
For $x=1$, we have $1 + y^a + z^b y^c = 0$.
\begin{enumerate}
\item Assume $p \equiv n-1 \pmod{n}$.
There exists an integer $m$ such that $p+1 = n m$.
% Putting $y \mapsto y^{3m}$, and $z \mapsto y^{4m} z^{5m}$, we have $1 + (y^{3m})^5 + y^{3m} ( y^{4m}z^{5m})^3 = 1 + y^{15m} + ( y z )^{15m} = 1 + y^{p+1} + Z^{p+1} = 0$, which is a Hermitian curve.
It follows from the following morphism
% \begin{eqnarray}
% \mathcal{H}(p+1) : Y^{p+1} + Z^{p+1} = 1 & \rightarrow & C_{p,a,b,c} \nonumber \\
% (Y,Z) & \mapsto & \left( Y^{\frac{b m }{\mathrm{gcd}(a,b,c)}}, Y^{\frac{- c m}{\mathrm{gcd}(a,b,c)}} Z^{\frac{a m}{\mathrm{gcd}(a,b,c)}} \right) \nonumber
% \end{eqnarray}
\begin{equation}
\left\{
\begin{array}{ccc}
\mathcal{H}_{p+1} : 1 + Y^{p+1} + Z^{p+1} = 0 & \rightarrow & 1 + y^a + z^b y^c = 0 \\
(Y,Z) & \mapsto & \left( Y^{\frac{b m }{\mathrm{gcd}(a,b,c)}}, Y^{\frac{- c m}{\mathrm{gcd}(a,b,c)}} Z^{\frac{a m}{\mathrm{gcd}(a,b,c)}} \right) 
\end{array}
\right. \nonumber
\end{equation}
that $C_{p,a,b,c}$ is covered by the maximal Hermitian curve $\mathcal{H}_{p+1}$, and so is $\mathbb{F}_{p^2}$-maximal.

\item Assume that $a$ and $b$ are coprime.
Let $d$ be a unique integer with $0 \leq d < a b$ such that $d \equiv 2 \pmod{a}$ and $d \equiv 0 \pmod{b}$.
Note that $c d $ and $a - 2 c$ (resp.\ $c (d-2)$ and $a$) are divisible by $b$ (resp.\ $a$).
Since $\mathrm{gcd}(a,b)=1$, the sum $cd + ( a- 2c) = c (d-2) + a$ is divisible by $ab$.
We also assume that $p \equiv d-1 \pmod{n}$, and then there exists an integer $m$ such that $p+1 = n m + d$.
% Putting $y \mapsto y^{3m}$, and $z \mapsto y^{4m} z^{5m}$, we have $1 + (y^{3m})^5 + y^{3m} ( y^{4m}z^{5m})^3 = 1 + y^{15m} + ( y z )^{15m} = 1 + y^{p+1} + Z^{p+1} = 0$, which is a Hermitian curve.
It follows from the following morphism
\begin{equation}
\left\{
\begin{array}{ccc}
\mathcal{H}_{p+1} : Y^p + Y = -Z^{p+1}  & \rightarrow & 1 + y^a + z^b y^c = 0 \nonumber \\
(Y,Z) & \mapsto & \left( Y^{\frac{b m }{\mathrm{gcd}(a,b,c)} + \frac{d-2}{a}}, Y^{\frac{- c m}{\mathrm{gcd}(a,b,c)}-\frac{c(d-2)+a}{ab}} Z^{\frac{a m}{\mathrm{gcd}(a,b,c)}+\frac{d}{b}}\right) \nonumber
\end{array}
\right. \nonumber
\end{equation}
% \begin{eqnarray}
% \mathcal{H}(p+1) : Y^p + Y = Z^{p+1}  & \rightarrow & C_{p,a,b,c} \nonumber \\
% (Y,Z) & \mapsto & \left( Y^{\frac{b m }{\mathrm{gcd}(a,b,c)} + \frac{d-2}{a}}, Y^{\frac{- c m}{\mathrm{gcd}(a,b,c)}-\frac{c(d-2)+a}{ab}} Z^{\frac{a m}{\mathrm{gcd}(a,b,c)}+\frac{d}{b}}\right) \nonumber
% \end{eqnarray}
that $C_{p,a,b,c}$ is covered by the maximal Hermitian curve $\mathcal{H}_{p+1}$, and so is maximal over $\mathbb{F}_{p^2}$.
\end{enumerate}
\end{proof}

Now we can see that Theorem \ref{thm:genus5} is a special case of Theorem \ref{thm:general} for $(a, b, c) = (5,3,1)$ with $d= 15$.

\if 0
\begin{cor}
\begin{enumerate}
\item If $g \geq 3$ is odd, then let $b = 3$ , $c = (g - 3)/2$ and $a = b + 2 c=g$.
Then the non-singular model $C_{p,a,b,c}$ of $x^a + y^a + z^b x^c y^c=0$ is a curve of genus $g$, and is $\mathbb{F}_{p^2}$-maximal if $p \equiv -1 \pmod{n}$ or $p \equiv d - 1 \pmod{n}$, where $n = 3 g$ and $d=...$. 
\item If $g$ is even, then let $b = 2$ , $c=g-1$ and $a = b + 2 c=2g$.
Then the non-singular model $C_{p,a,b,c}$ of $x^a + y^a + z^b x^c y^c=0$ is a curve of genus $g$, and is $\mathbb{F}_{p^2}$-maximal if $p \equiv -1 \pmod{n}$ or $p \equiv d - 1 \pmod{n}$, where $n = 4 g$ and $d=...$. 
\end{enumerate}
\end{cor}
\fi

\begin{rem}
We replace $y$ and $z$ in $1 + y^a + z^b y^c = 0$ with $x$ and $y$ respectively, say $1 + x^a + y^b x^c = 0$.
It follows from $\mathrm{gcd}(a,b)=1$ with $b + 2 c = a$ that $\mathrm{gcd}(b,c)=1$, and that $a$ and $b$ are odd numbers.
There exist $\ell$ and $k$ such that $k b + \ell c = 1$.
Without loss of generality, we may assume $\ell < 0$ with $0 \leq - \ell < b$.
One can check that $- \ell a + 2 \equiv 2 \pmod{a}$ and $- \ell a + 2 \equiv 0 \pmod{b}$ with $0 \leq - \ell a +2 < a b$,
and thus can take $d$ to be $- \ell a + 2$.
Considering $(x, y) \mapsto (x^{-\ell}, -y / x^k)$,
we transform $1 + x^a + y^b x^c = 0$ into $y^b = x( 1 + x^{- \ell a} )$.
Assume $p \equiv d-1 \pmod{a b}$, i.e., $p \equiv - \ell a + 1 \pmod{a b}$.
If $p \equiv - \ell a + 1 \pmod{- \ell a b}$, it follows from \cite[Proposition 4.12]{Taf2} that the curve $\mathcal{C}( b, -\ell a + 1) : y^b = x^{-\ell a + 1} + x$ is maximal over $\mathbb{F}_{p^2}$, and hence $1 + x^a + y^b x^c = 0$ is also maximal over $\mathbb{F}_{p^2}$.
\if 0
Since $- \ell a + 1 $
The curve $\mathcal{C}( b, -\ell a + 1) : y^b = x^{-\ell a + 1} + x$ is maximal over $\mathbb{F}_{p^2}$ if and only if $p+1 \equiv $
% for an integer $k$,
% we have $y^b = x^{k b-c} ( 1 + x^a )$.
% For an appropriate $k$, we have $b > k b - c \geq 1$.
The curve $y^b = x^{\ell} ( 1 + x^a )$ is maximal over $\mathbb{F}_{p^2}$ if and only if $p+1 \equiv $

Assume $\mathrm{gcd}(n,m)=1$ .
Let $n^{\prime}$ be an integer in $[0, n m)$ such that $n^{\prime} \equiv 2 \pmod{m}$ and $n^{\prime} \equiv 0 \pmod{n}$.
Even if $q+1 \neq 0 \pmod{m}$,  The $\mathcal{X}(n, \ell, m)$ is maximal if $p + 1 \equiv n^{\prime} \pmod{n m}$.
\fi
\end{rem}

\if 0
Moreover, we have another family of $\mathbb{F}_{p^2}$-maximal curves.

\begin{thm}\label{thm:general2}
Let $a$, $b$ and $c$ be natural numbers with $b + 2 c = a$.
Put $n := a b - a + c$, and 
\[
g := \frac{1}{2} \left( a b - a + c + 1 - \mathrm{gcd}(b,a-c) - \mathrm{gcd}(b-1,c) \right).
\]
Then the curve $C_{p,a,b,c}^{\prime} : x^{a-1}z + y^a + z^b x^c y^c = 0$ of genus $g$ is maximal over $\mathbb{F}_{p^2}$ if $p \equiv -1 \pmod{n}$.
% Then we have the following:
% \begin{enumerate}
% \item[$(1)$] The curve $C_{p,a,b,c}^{\prime} : x^{a-1}z + y^a + z^b x^c y^c = 0$ of genus $g$ is maximal over $\mathbb{F}_{p^2}$ if $p \equiv -1 \pmod{n}$.
% \item[$(2)$] Suppose that $a$ and $b$ are coprime.
% There exists an integer $d$ such that $d \equiv 2 \pmod{a}$ and $d \equiv 0 \pmod{b}$, and any two such $d$ are congruent modulo $a b$.
% The curve $C_{p,a,b,c}^{\prime} : x^{a-1} z + y^a + z^b x^c y^c = 0$ of genus $g$ is maximal over if $p \equiv d-1 \pmod{n}$.
% \end{enumerate}
\end{thm}

\begin{proof}
For $x=1$, we have $z + y^a + z^b y^c = 0$.
Assume $p \equiv -1 \pmod{n}$.
There exists an integer $m$ such that $p+1 = n m$.
% Putting $y \mapsto y^{3m}$, and $z \mapsto y^{4m} z^{5m}$, we have $1 + (y^{3m})^5 + y^{3m} ( y^{4m}z^{5m})^3 = 1 + y^{15m} + ( y z )^{15m} = 1 + y^{p+1} + Z^{p+1} = 0$, which is a Hermitian curve.
It follows from the following morphism
\begin{eqnarray}
\mathcal{H}(p+1) : Y^{p+1} + Z^{p+1} = 1 & \rightarrow & C_{p,a,b,c}^{\prime} \nonumber \\
(Y,Z) & \mapsto & \left( Y^{(b-1)m}Z^m, Y^{- cm} Z^{am} \right) \nonumber
\end{eqnarray}
that $C_{p,a,b,c}^{\prime}$ is covered by the maximal Hermitian curve $\mathcal{H}(p+1)$, and so is maximal over $\mathbb{F}_{p^2}$.
\if 0
Assume that $a$ and $b$ are coprime.
Let $d$ be an integer such that $d \equiv 2 \pmod{a}$ and $d \equiv 0 \pmod{b}$, and any two such $d$ are congruent modulo $a b$.
We also assume that $p \equiv d-1 \pmod{n}$, and thus there exists an integer $m$ such that $p+1 = n m + d$.
% Putting $y \mapsto y^{3m}$, and $z \mapsto y^{4m} z^{5m}$, we have $1 + (y^{3m})^5 + y^{3m} ( y^{4m}z^{5m})^3 = 1 + y^{15m} + ( y z )^{15m} = 1 + y^{p+1} + Z^{p+1} = 0$, which is a Hermitian curve.
It follows from the following morphism
\begin{eqnarray}
\mathcal{H}(p+1) : Y^{p} + Y = Z^{p+1} = & \rightarrow & C_{p,a,b,c}^{\prime} \nonumber \\
(Y,Z) & \mapsto & \left( Y^{\frac{b m }{\mathrm{gcd}(a,b,c)} + \frac{d-2}{a}}, Z^{\frac{a m}{\mathrm{gcd}(a,b,c)}+\frac{d}{b}}Y^{\frac{- c m}{\mathrm{gcd}(a,b,c)}-\frac{c(d-2)+a}{ab}} \right) \nonumber
\end{eqnarray}
that $C_{p,a,b,c}^{\prime}$ is covered by the maximal Hermitian curve $\mathcal{H}(p+1)$, and so is maximal over $\mathbb{F}_{p^2}$.
\fi
\end{proof}
\fi

%==============
\subsection{Examples}
%==============

% \renewcommand{\arraystretch}{1.5}
\begin{table}[t]
\centering{
\caption{Parameters for which $C_{p,a,b,c}$ are maximal and superspecial curves over $\mathbb{F}_{p^2}$.
There are two cases listed in Theorem \ref{thm:general}.
}
\label{table:3-3}
% \scalebox{0.97}{
\begin{tabular}{c||c|c|c} \hline
genus & congruence & $(a,b,c)$ & case  \\ \hline
 \multirow{2}{*}{$4$} & $p \equiv 15 \pmod{16}$  & $(8,2,3)$& case (1) with $n=16$\\ \cline{2-4}
 & $p \equiv 9 \pmod{10}$  & $(10,2,4)$& case (1) with $n=10$\\ \hline
\multirow{2}{*}{$5$} & $p \equiv 14 \pmod{15}$  & \multirow{2}{*}{$(5,3,1)$} & case (1) with $n=15$\\ 
 & $p \equiv 11 \pmod{15}$  &  & case (2) with $(n,d) = (15,12)$\\ \hline
 \multirow{2}{*}{$6$} & $p \equiv 13 \pmod{14}$  & $(14,2,6)$& case (1) with $n=14$\\ \cline{2-4}
  & $p \equiv 23 \pmod{24}$  & $(12,2,5)$& case (1) with $n=24$\\ \hline
 \multirow{3}{*}{$7$} & $p \equiv 20 \pmod{21}$  & \multirow{2}{*}{$(7,3,2)$} & case (1) with $n=21$\\ 
 & $p \equiv 8 \pmod{21}$  &  & case (2) with $(n,d) = (21,9)$\\ \cline{2-4}
 & $p \equiv 8 \pmod{9}$  & $(9,3,3)$& case (1) with $n=9$\\ \hline
 \multirow{2}{*}{$8$} & $p \equiv 31 \pmod{32}$  & $(16,2,7)$& case (1) with $n=32$\\ \cline{2-4}
 & $p \equiv 17 \pmod{18}$  & $(18,2,8)$& case (1) with $n=18$\\ \hline
  $9$ & $p \equiv 23 \pmod{24}$  & $(6,4,1)$& case (1) with $n=24$\\ \hline
 \multirow{2}{*}{$10$}  & $p \equiv 39 \pmod{40}$  & $(20,2,9)$& case (1) with $n=40$\\ \cline{2-4}
  & $p \equiv 21 \pmod{22}$  & $(22,2,10)$& case (1) with $n=22$\\ \hline
\multirow{3}{*}{$11$} & $p \equiv 32 \pmod{33}$  & \multirow{2}{*}{$(11,3,4)$} & case (1) with $n=33$\\ 
 & $p \equiv 23 \pmod{33}$  &  & case (2) with $(n,d) = (33,24)$\\ \cline{2-4}
 & $p \equiv 15 \pmod{16}$  & $(8,4,2)$& case (1) with $n=16$\\ \hline
\end{tabular}
}
% }
\end{table}

There exists an $\mathbb{F}_{p^2}$-maximal and superspecial curve for each case in Table \ref{table:3-3}.
% Table \ref{table:3-3} shows 
Three concrete examples introduced below are constructed from parameters in Table \ref{table:3-3}.
We also refer to \cite{LMPT18} for the existence of (smooth) supersingular curves over $\overline{\mathbb{F}_{p}}$, and \cite{Kudo18} for the existence of (non-hyperelliptic) superspecial curves of genus $4$.
\begin{enumerate}
\item The curve $C_{p,8,2,3}$ with $(a,b,c)=(8,2,3)$, given by $x^8 + y^8 + z^2 x^3 y^3 = 0$, is an $\mathbb{F}_{p^2}$-maximal superspecial curve of genus $4$ if $p \equiv 15 \pmod{16}$, e.g., $p= 31$, $47$, $79$, $127$, $191$.
Since $31 \equiv 1 \pmod{3} $ and $31 \equiv 1 \pmod{5}$, the superspecial (and hence supersingular) curve $C_{31,8,2,3}$ is not obtained in \cite[Theorems 1.1 and 5.5]{LMPT18} nor \cite[Theorem 3.1]{Kudo18}.
The existence of a superspecial curve of genus $4$ for $p \equiv 1 \pmod{3}$ is a positive answer to \cite[Problem 3.3]{Kudo18}.
\item For $(a,b,c)=(12,2,5)$, the curve $C_{p,12,2,5}$ with $x^{12} + y^{12} + z^{2} x^{5} y^{5} = 0$ is an $\mathbb{F}_{p^2}$-maximal superspecial curve of genus $6$ if $p \equiv 23 \pmod{24}$, e.g., $p= 23$, $47$, $71$, $167$, $191$.
Since $191 \equiv 9 \pmod{13} $ and $191 \equiv 2 \pmod{7}$, the superspecial (and thus supersingular) curve $C_{191,12,2,5}$ is not obtained in \cite[Theorems 1.1 and 5.5]{LMPT18}.
\item For $(a,b,c)=(7,3,2)$, the curve $C_{p,7,3,2}$ of genus $7$ is $\mathbb{F}_{p^2}$-maximal and superspecial if $p \equiv 20 \pmod{21}$, e.g., $p=41$, $83$, $167$ or if $p \equiv 8 \pmod{21}$, e.g., $p=29$, $71$, $113$, $197$.
Note that except for $p=29$, each $p$ listed above does not satisfy any of congruences ($p \equiv 14 \bmod{15}$ and $p \equiv 15 \bmod{16}$) for $g=7$ listed in \cite[Theorems 1.1 and 5.5]{LMPT18}.
\end{enumerate}

%==================

\end{document}